\numberwithin{equation}{section}
\newtheorem{theorem}{Theorem}[section]
\newtheorem{assum}[theorem]{Assumption}
\newtheorem{lem}[theorem]{Lemma}
\newtheorem{remark}[theorem]{Remark}
\newcommandx{\unsure}[2][1=]{\todo[linecolor=red,backgroundcolor=red!25,bordercolor=red,#1]{#2}}
\newcommandx{\change}[2][1=]{\todo[linecolor=blue,backgroundcolor=blue!25,bordercolor=blue,#1]{#2}}
\newcommandx{\info}[2][1=]{\todo[linecolor=OliveGreen,backgroundcolor=OliveGreen!25,bordercolor=OliveGreen,#1]{#2}}
\newcommandx{\improvement}[2][1=]{\todo[linecolor=Plum,backgroundcolor=Plum!25,bordercolor=Plum,#1]{#2}}
\newcommandx{\thiswillnotshow}[2][1=]{\todo[disable,#1]{#2}}
\newcommand{\Var}[1]{\mathrm{Var}\left \{ #1\right\}}   
\title{Linear interpolation gives better gradients than Gaussian smoothing in derivative-free optimization}
\author{%
Albert S Berahas \\
Lehigh University\\
Bethlehem, PA\\
\url{albertberahas@gmail.com}
\And
Liyuan Cao \\
Lehigh University\\
Bethlehem, PA\\
\url{liyuan@lehigh.edu}
\AND
Krzysztof Choromanski\\
Google Brain\\
New York, NY\\
\url{kchoro@google.com}
\And
Katya Scheinberg \\
Lehigh University\\
Bethlehem, PA\\
\url{katyas@lehigh.edu}
}
\begin{document}

\maketitle

\begin{abstract}

In this paper, we consider derivative free optimization problems, where the objective function is smooth but is computed with some amount of noise, the function evaluations are expensive and no derivative information is available. We are motivated by policy optimization problems in reinforcement learning that have recently become popular \cite{ES, choro, choro2, fazel2018global}, and that can be formulated as derivative free optimization problems with the aforementioned characteristics. In each of these works some approximation of the gradient is constructed and a (stochastic) gradient method is applied. In \cite{ES} the gradient information is aggregated along Gaussian directions, while in \cite{choro, choro2} it is computed along orthogonal direction. We provide a convergence rate analysis for a first-order line search method, similar to the ones used in the literature, and derive the conditions on the gradient approximations that ensure this convergence. We then demonstrate via rigorous analysis of the variance and by numerical comparisons on reinforcement learning tasks that the Gaussian sampling method used in \cite{ES} is significantly inferior to the orthogonal sampling used in \cite{choro,choro2} as well as more general interpolation methods. 

\end{abstract}

\section{Introduction}
\label{sec:intro}

We consider an unconstrained optimization problem of the form
\begin{align}		\label{eq.prob}
	\min_{x \in \mathbb{R}^n} f(x) = \phi(x) + \epsilon(x),
\end{align}
where $f: \mathbb{R}^n\rightarrow \mathbb{R}$ is the output of some black-box procedure, e.g.,  a simulation,  which may be nonsmooth or discontinuous, but is a noisy measurement of a smooth function $\phi$. In this setting, for any given $x\in \mathbb{R}^n$, one is able to obtain (at some cost)  $f(x)$, but  one cannot obtain explicit estimates of $\nabla \phi(x)$. We call this the Derivative-Free Optimization (DFO) setting \cite{ConnScheVice08c,LarsMeniWild2019}. 

We are motivated by the recent increase of  interest in applying and analyzing  DFO methods for policy optimization in reinforcement learning (RL) \cite{ES,choro, choro2, rowland, fazel2018global} as a particular case of simulation optimization. The key step in the majority of these methods is the computation of an estimate of the gradient of the objective function. Since  $\nabla f(x)$ may not exist, we are interested in computing estimates of $\nabla \phi(x)$,  which we denote as $g(x)$ throughout. In the context of RL, $\phi(x)$ may be a smoothing of the noisy reward function. We assume that the noise $\epsilon(x)$ is bounded in absolute value by some constant $\epsilon_f$, but we do not make any other assumption; for example, we do not assume that the noise is stochastic or that it vanishes as we approach the solution. This reflects particular robotics applications for which DFO methods have been shown to be successful \cite{ES}.  
 
In \cite{ES} it was shown that when the objective function evaluations (rollouts in RL) can be performed in parallel, effective gradient estimates can be computed. 
In particular the authors of \cite{ES} used a technique arising from Gaussian smoothing (see e.g., \cite{nesterov2017random, maggiar2018derivative}), which they referred to as {\em evolutionary strategies}. We will not use this terminology in this paper, since it encompasses a large and different  class of optimization methods than those used in \cite{ES} and those considered here. The essence of the Gaussian smoothing method is to compute gradient estimates as a sum of estimates of directional derivatives along Gaussian directions. In particular, $g(x)$ is computed as follows
 \begin{align}		\label{eq:GSG_intro}
	g(x) = \frac{1}{N} \sum_{i=1}^N \frac{f(x+\sigma u_i) - f(x)}{\sigma} u_i, 
\end{align}
 or using the symmetric version
\begin{align}		\label{eq:cGSG_intro}
	g(x) = \frac{1}{2N} \sum_{i=1}^N \frac{f(x+\sigma u_i) - f(x-\sigma u_i)}{\sigma} u_i,
\end{align}
where $\{u_i: i=1, \ldots, N\}$ is a set of random directions that follow a standard Gaussian distribution and $\sigma$ is the sampling radius. 

In follow-up works \cite{choro,choro2, rowland} is was  empirically shown that better gradient estimates can be obtained by using orthogonal directions instead of the Gaussian directions, namely, where the $u_i$'s are chosen to be mutually orthogonal. In the simplest case, this method reduces to the well know finite difference gradient approximation, where $u_i=e_i$, the $i$-th column of the identity matrix. However, it has been observed that for RL tasks randomly chosen sets of orthogonal directions are more effective in practice. We discuss this in more detail in the computational results section. 

While results in \cite{choro, choro2, rowland}  provide empirical confirmation of the advantages of  structured (orthogonal) directions  for various RL benchmark sets, they only scratch the surface of the theory of structured sampling in blackbox optimization. First of all, they do not quantify how gradient accuracy gains depend on the parameters of the training algorithm. This information is often crucial for practitioners.
Moreoever, the key area that is underexplored in \cite{choro, choro2, rowland} is the connection between structured directions and downstream optimization gains. None of these recent papers presents any convergence result for the proposed algorithms. 

Here, we provide a rigorous detailed quantitative analysis that indicates that, when using the same number of samples, methods employing orthogonal directions produce significantly better estimates of the gradient (smaller error) than those employing Gaussian directions. 

 We should note that as an alternative to Gaussian directions, random directions on a unit sphere can also be used to estimate the gradient; see e.g.,  \cite{fazel2018global,flaxman2005online,berahas2019theoretical}. While this method has several theoretical advantages over using Gaussian directions, similar analysis to the one presented in this paper reveals that it is inferior to using orthogonal directions.

\paragraph{Contribitions} The results of this paper can be summarized as follows:
\begin{itemize}[noitemsep,nolistsep,topsep=0pt,leftmargin=20pt]
\item We describe a generic line-search algorithm  adapted to the case of noisy function evaluations with bounded noise for solving \eqref{eq.prob}. 
\item  We establish complexity bounds for this algorithm, in terms of convergence to a neighborhood of an optimal solution defined by the noise, when applied to the minimization of convex, strongly convex and nonconvex functions, under the condition that the gradient estimate $g(x)$ satisfies $\| g(x) - \nabla \phi(x) \| \leq  \theta\|\nabla \phi(x)\|$ for some $\theta \in (0,\sfrac{1}{2}]$. 
\item We then show that if $g(x)$ is computed via linear interpolation of function values using $n$ linearly independent directions $u_i$, and a suitably chosen $\sigma$, i.e., $f(x+\sigma u_i)$,  the above bound is satisfied deterministically. Moreover, we show that $g(x)$ computed using a scaled version of formula \eqref{eq:GSG_intro} using orthonormal directions is equivalent to linear interpolation. 
\item Finally, we analyze the variance of $g(x)$ computed via  \eqref{eq:GSG_intro} using Gaussian directions and show that to satisfy the  bound $\| g(x) - \nabla \phi(x) \| \leq  \theta\|\nabla \phi(x)\|$ with probability $1-\delta$ the number of samples $N$ needs to be greater than $\sfrac{2n}{\theta^2\delta}$, which is significantly greater than $n$. 
\item We support our theoretical bounds and findings with computational experiments on artificial and real problems that arise in reinforcement learning.
\end{itemize}

\paragraph{Organization} The paper is organized as follows. In Section \ref{sec:conv_analysis} we present the analysis of a general gradient descent method with a line search that uses gradient approximations in lieu of the true gradient. We introduce and analyze several methods for approximating the gradient using only function values in Section \ref{sec:grad_approx}. We present a numerical comparison of the gradient approximations and illustrate the performance of different DFO algorithms that employ these gradient approximations in \ref{sec:num_exp}. Finally, in Section \ref{sec:fin_rem}, we make some concluding remarks.


\section{Convergence Analysis}
\label{sec:conv_analysis}
In this section, we analyze a general gradient method with a modified back-tracking line search in the DFO setting. The results presented in this section are an adaptation of those presented in \cite{berahas2019derivative} and  \cite{berahas2019theoretical}. 

We consider an iteration of the form:
\begin{align}	\label{eq.iteration}
	x_{k+1} = x_k - \alpha_k g(x_k),
\end{align}
where $g(x_k)$ is an approximation to the gradient constructed using only evaluations of $f$, and $\alpha_k$, the step size parameter, is chosen to satisfy the relaxed Armijo condition
\begin{align}	\label{eq.rel_Arm}
	f(x_k - \alpha_k g(x_k)) \leq f(x_k) - c_1 \alpha_k \| g(x_k) \|^2 + 2\epsilon_f,
\end{align}
where $c_1 \in (0,1)$, for some $\epsilon_f \geq 0$. If a trial value $\alpha_k$ does not satisfy \eqref{eq.rel_Arm}, the step size parameter is set to a fixed fraction $\tau<1$ of the previous value, i.e., $\alpha_k = \tau \alpha_k$.

We make the following assumptions.
\begin{assum} \label{assum.func_bnd} \textbf{(Boundedness of Noise in the Function)} There is a constant $\epsilon_f \geq 0$ such that $ | f(x) - \phi(x)| = |e(x)| \leq \epsilon_f$ for all $x \in \mathbb{R}^n$.
\end{assum}

\begin{assum} \label{assum.lip_grad} \textbf{(Lipschitz Continuity of the Gradients of $\pmb{\phi}$)} The function $\phi$ is continuously differentiable, and the gradients of $\phi$ are $L$-Lipschitz continuous for all $x \in \mathbb{R}^n$.
\end{assum}

We establish results under the \emph{norm condition} \cite{carter1991global} given by
\begin{align}		\label{eq.normcond}
	\| g(x) - \nabla \phi(x) \| \leq \theta \| \nabla \phi (x) \|,
\end{align}
for $\theta \in [0,\sfrac{1}{2})$, which implies
\begin{align}
	(1-\theta) \| \nabla \phi (x) \| \leq \| g(x) \| \leq (1+\theta) \| \nabla \phi(x) \|.
\end{align}

\begin{lem}	\label{lem.1} Let Assumptions \ref{assum.func_bnd} and \ref{assum.lip_grad} hold. If 
\begin{align}		\label{eq.alpha_bar}
	\alpha_k \leq \bar{\alpha} = \frac{2(1-2\theta - c_1(1-\theta)}{L(1-\theta)}.
\end{align}
for $k=0,1,\dots$, and \eqref{eq.normcond} holds, then the relaxed Armijo condition \eqref{eq.rel_Arm} is satisfied. Moreover, 
\begin{align}		\label{eq.decrease_phi}
	\phi(x_{k+1}) \leq \phi(x_k) - c_1 \tau \bar{\alpha} (1-\theta)^2 \| \nabla \phi(x_k)\|^2 + 4 \epsilon_f.
\end{align}
\end{lem}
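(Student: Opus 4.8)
The plan is to combine the descent lemma for $L$-smooth functions with the norm condition \eqref{eq.normcond}, and then to transfer the resulting decrease of $\phi$ to the noisy function $f$ through Assumption \ref{assum.func_bnd}. By Assumption \ref{assum.lip_grad}, for any step size $\alpha \ge 0$,
\begin{align*}
\phi\big(x_k - \alpha g(x_k)\big) \;\le\; \phi(x_k) - \alpha\, \nabla\phi(x_k)^\top g(x_k) + \tfrac{L\alpha^2}{2}\,\|g(x_k)\|^2 .
\end{align*}
The crucial step is to lower bound $\nabla\phi(x_k)^\top g(x_k)$ purely in terms of $\|g(x_k)\|^2$: writing $\nabla\phi(x_k)^\top g(x_k) = \|g(x_k)\|^2 + \big(\nabla\phi(x_k) - g(x_k)\big)^\top g(x_k)$ and then applying Cauchy--Schwarz, the norm condition \eqref{eq.normcond}, and the consequence $\|\nabla\phi(x_k)\| \le \|g(x_k)\|/(1-\theta)$ of \eqref{eq.normcond}, one gets
\begin{align*}
\nabla\phi(x_k)^\top g(x_k) \;\ge\; \|g(x_k)\|^2 - \theta\,\|\nabla\phi(x_k)\|\,\|g(x_k)\| \;\ge\; \frac{1-2\theta}{1-\theta}\,\|g(x_k)\|^2 ,
\end{align*}
which is nonnegative since $\theta \in [0,\tfrac12)$.

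Substituting this bound into the descent inequality and using $\alpha_k \le \bar\alpha$, the coefficient of $-\|g(x_k)\|^2$ becomes $\alpha_k\big(\tfrac{1-2\theta}{1-\theta} - \tfrac{L\alpha_k}{2}\big) \ge \alpha_k\big(\tfrac{1-2\theta}{1-\theta} - \tfrac{L\bar\alpha}{2}\big) = c_1\alpha_k$, the last equality being exactly the definition of $\bar\alpha$ in \eqref{eq.alpha_bar} (so implicitly $c_1 < \tfrac{1-2\theta}{1-\theta}$, which makes $\bar\alpha > 0$). Hence $\phi(x_{k+1}) \le \phi(x_k) - c_1\alpha_k\|g(x_k)\|^2$, and applying Assumption \ref{assum.func_bnd} in the forms $f(x_{k+1}) \le \phi(x_{k+1}) + \epsilon_f$ and $\phi(x_k) \le f(x_k) + \epsilon_f$ yields the relaxed Armijo condition \eqref{eq.rel_Arm}.

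For \eqref{eq.decrease_phi}, note that by the previous paragraph every trial step of length at most $\bar\alpha$ satisfies \eqref{eq.rel_Arm}; since the line search only multiplies a rejected step by $\tau < 1$, the accepted $\alpha_k$ satisfies both \eqref{eq.rel_Arm} and $\alpha_k \ge \tau\bar\alpha$. Rearranging \eqref{eq.rel_Arm} and passing from $f$ back to $\phi$ via Assumption \ref{assum.func_bnd} (a further $2\epsilon_f$, hence $4\epsilon_f$ in total) gives $\phi(x_{k+1}) \le \phi(x_k) - c_1\alpha_k\|g(x_k)\|^2 + 4\epsilon_f$; the bounds $\alpha_k \ge \tau\bar\alpha$ and $\|g(x_k)\| \ge (1-\theta)\|\nabla\phi(x_k)\|$ (again from \eqref{eq.normcond}) then produce \eqref{eq.decrease_phi}.

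The main obstacle is the inner-product estimate: to reach the precise threshold $\bar\alpha$ in \eqref{eq.alpha_bar} one must bound $\|\nabla\phi(x_k)\|$ by $\|g(x_k)\|/(1-\theta)$ rather than the other way around — the more obvious estimate $\nabla\phi(x_k)^\top g(x_k) \ge (1-\theta)\|\nabla\phi(x_k)\|^2$ together with $\|g(x_k)\| \le (1+\theta)\|\nabla\phi(x_k)\|$ only admits a strictly smaller step size and would not reproduce \eqref{eq.alpha_bar}. The rest — the descent lemma, the triangle/Cauchy--Schwarz manipulations, and the $\epsilon_f$ bookkeeping from Assumption \ref{assum.func_bnd} — is routine.
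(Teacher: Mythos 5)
Your proof is correct and follows essentially the same route as the paper's: the $L$-smoothness descent lemma, the decomposition $\nabla\phi(x_k)^\top g(x_k)=\|g(x_k)\|^2+(\nabla\phi(x_k)-g(x_k))^\top g(x_k)$ with Cauchy--Schwarz and the bound $\|\nabla\phi(x_k)\|\le\|g(x_k)\|/(1-\theta)$ to obtain the coefficient $\tfrac{1-2\theta}{1-\theta}-\tfrac{\alpha_k L}{2}$, the $\epsilon_f$ bookkeeping to pass between $\phi$ and $f$, and the backtracking argument giving $\alpha_k\ge\tau\bar\alpha$ together with $\|g(x_k)\|\ge(1-\theta)\|\nabla\phi(x_k)\|$ for \eqref{eq.decrease_phi}. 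No substantive differences.
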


\begin{proof} Since $\phi$ satisfies \ref{assum.lip_grad} and \eqref{eq.normcond} holds, we have
\begin{align*}
	\phi(x_k - \alpha_k g(x_k)) & \leq \phi(x_k) - \alpha_k g(x_k)^T \nabla \phi (x_k) + \frac{\alpha_k^2 L}{2} \| g(x_k)\|^2\\
			& = \phi(x_k) - \alpha_k g(x_k)^T (\nabla \phi (x_k) - g(x_k)) - \alpha_k \left[1 -\frac{\alpha_k L}{2}\right] \| g(x_k)\|^2\\
			& \leq \phi(x_k) + \alpha_k \| g(x_k)\| \|\nabla \phi (x_k) - g(x_k)\| - \alpha_k \left[1 -\frac{\alpha_k L}{2}\right] \| g(x_k)\|^2\\
			& \leq \phi(x_k) + \frac{\alpha_k \theta}{1 - \theta }\| g(x_k)\|^2 - \alpha_k \left[1 -\frac{\alpha_k L}{2}\right] \| g(x_k)\|^2\\
			& = \phi(x_k) - \alpha_k \left[\frac{1-2\theta}{1-\theta} -\frac{\alpha_k L}{2}\right] \| g(x_k)\|^2.
\end{align*}
By Assumption \ref{assum.func_bnd}, we have
\begin{align*}
	f(x_k - \alpha_k g(x_k)) \leq f(x_k) - \alpha_k \left[\frac{1-2\theta}{1-\theta} -\frac{\alpha_k L}{2}\right] \| g(x_k)\|^2 + 2 \epsilon_f.
\end{align*}
From this we conclude that \eqref{eq.rel_Arm} holds whenever 
\begin{align*}
	f(x_k) - \alpha_k\left[\frac{1-2\theta}{1-\theta} -\frac{\alpha_k L}{2}\right] \| g(x_k)\|^2 + 2 \epsilon_f \leq f(x_k) - c_1 \alpha_k \| g(x_k) \|^2 + 2\epsilon_f,
\end{align*}
which is equivalent to \eqref{eq.alpha_bar}.

We have shown that when $\alpha_k \leq \bar{\alpha}$ the relaxed Armijo condition is \eqref{eq.rel_Arm} is satisfied. Since we find $\alpha_k$ using a constant backtracking factor of $\tau <1$, we have that $\alpha_k > \tau \bar{\alpha}$. Therefore, using Assumption \eqref{assum.func_bnd}, we have
\begin{align*}
	\phi(x_{k+1}) &\leq \phi(x_k) - c_1 \alpha_k \| g(x_k) \|^2 + 4\epsilon_f \\
				& \leq \phi(x_k) - c_1 \tau \bar{\alpha} (1-\theta)^2 \| \nabla \phi(x_k) \|^2 + 4\epsilon_f,
\end{align*}
which completes the proof.
\end{proof}

\subsection{Convex Functions}

In this section, we state and prove results for the case where the function $\phi$ is  convex. We make the following additional standard assumption.
\begin{assum} \label{assum.conv} \textbf{(Convexity and bounded level sets of $\pmb{\phi}$)}  The function $\phi$ is convex and has bounded level sets, i.e., 
\begin{align*}
	\|x - x^\star\| \leq D, \quad \text{for all $x$ with } \ f(x) \leq f(x_0),	
\end{align*}
where $x^\star$ is a global minimizer of $\phi$. Let $\phi^\star = \phi(x^\star)$.
\end{assum}

\begin{theorem} \label{thm.conv} Suppose that Assumptions \ref{assum.func_bnd}, \ref{assum.lip_grad} and \ref{assum.conv} hold. Let $\{ x_k \}$ be the iterates  generated by \eqref{eq.iteration}, where  $\alpha_k$ satisfies the relaxed Armijo condition \eqref{eq.rel_Arm}. Then, for $k=0,1,\dots$, 
\begin{align}	\label{eq.convex}
	\phi(x_{k}) - \phi^\star \leq \max \left\{ \frac{D^2}{k(1-\gamma)\eta}, \frac{2D \sqrt{\epsilon_f}}{\sqrt{\gamma \eta}}+4\epsilon_f\right\}, 
\end{align}
where $\eta = c_1 \tau \bar{\alpha}(1-\theta)^2$, and $\bar{\alpha}$ is given in \eqref{eq.alpha_bar}.
\end{theorem}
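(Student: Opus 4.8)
The plan is to convert the one-step decrease of Lemma~\ref{lem.1} into a sublinear rate via the usual convexity argument for gradient descent, with a two-regime split governed by an auxiliary parameter $\gamma\in(0,1)$. By Lemma~\ref{lem.1}, the backtracking line search produces a step with $\alpha_k>\tau\bar\alpha$, and hence \eqref{eq.decrease_phi} holds, i.e. with $\eta=c_1\tau\bar\alpha(1-\theta)^2$,
\begin{align*}
\phi(x_{k+1})\le\phi(x_k)-\eta\|\nabla\phi(x_k)\|^2+4\epsilon_f .
\end{align*}
Write $\Delta_k=\phi(x_k)-\phi^\star$. Since the iterates remain in the level set (they are near-monotone in $\phi$; see the obstacle below), Assumption~\ref{assum.conv} gives $\|x_k-x^\star\|\le D$, and convexity yields $\Delta_k\le\nabla\phi(x_k)^T(x_k-x^\star)\le D\|\nabla\phi(x_k)\|$, so $\|\nabla\phi(x_k)\|^2\ge\Delta_k^2/D^2$.

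Next I treat the ``large-gradient'' iterations. Suppose $\Delta_k>2D\sqrt{\epsilon_f}/\sqrt{\gamma\eta}$. Then $\|\nabla\phi(x_k)\|^2\ge\Delta_k^2/D^2>4\epsilon_f/(\gamma\eta)$, so $4\epsilon_f<\gamma\eta\|\nabla\phi(x_k)\|^2$, and the decrease inequality becomes
\begin{align*}
\Delta_{k+1}\le\Delta_k-(1-\gamma)\eta\|\nabla\phi(x_k)\|^2\le\Delta_k-\frac{(1-\gamma)\eta}{D^2}\,\Delta_k^2 .
\end{align*}
If this case holds at every iteration $0,\dots,k-1$, then with $c=(1-\gamma)\eta/D^2$ we have $\Delta_{j+1}\le\Delta_j$ and $\tfrac{1}{\Delta_{j+1}}-\tfrac{1}{\Delta_j}=\tfrac{\Delta_j-\Delta_{j+1}}{\Delta_j\Delta_{j+1}}\ge\tfrac{c\Delta_j}{\Delta_{j+1}}\ge c$; telescoping the reciprocals gives $\tfrac{1}{\Delta_k}\ge ck$, i.e. $\Delta_k\le D^2/((1-\gamma)\eta k)$, which is the first term in the maximum of \eqref{eq.convex}.

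Finally I combine this with the ``small-gradient'' regime. Let $K$ be the first iteration with $\Delta_K\le 2D\sqrt{\epsilon_f}/\sqrt{\gamma\eta}$ (if none exists, the bound of the previous paragraph holds for all $k$ and we are done). For $k<K$ every prior iteration is in the large-gradient case, so $\Delta_k\le D^2/((1-\gamma)\eta k)$. For $k\ge K$ I claim by induction that $\Delta_k\le 2D\sqrt{\epsilon_f}/\sqrt{\gamma\eta}+4\epsilon_f$: it holds at $k=K$; for the inductive step, if $\Delta_k\le 2D\sqrt{\epsilon_f}/\sqrt{\gamma\eta}$ then dropping the nonpositive term in the decrease inequality gives $\Delta_{k+1}\le\Delta_k+4\epsilon_f\le 2D\sqrt{\epsilon_f}/\sqrt{\gamma\eta}+4\epsilon_f$, while if $\Delta_k>2D\sqrt{\epsilon_f}/\sqrt{\gamma\eta}$ we are in the large-gradient case and $\Delta_{k+1}\le\Delta_k$, preserving the bound. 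Taking the maximum of the two regime bounds yields \eqref{eq.convex} with $\eta=c_1\tau\bar\alpha(1-\theta)^2$.

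The only delicate point is the bookkeeping in the last step: once $\Delta_k$ drops to the noise floor, $\phi$ need not decrease monotonically (it may grow by up to $4\epsilon_f$ per step), so one must verify that the iterates cannot drift out of the neighborhood $\{\Delta_k\le 2D\sqrt{\epsilon_f}/\sqrt{\gamma\eta}+4\epsilon_f\}$ --- which is precisely what the induction handles --- and, relatedly, that $\|x_k-x^\star\|\le D$ remains valid so the convexity bound $\Delta_k\le D\|\nabla\phi(x_k)\|$ can be applied throughout (it follows from the strict decrease of $\phi$ in the large-gradient phase and from the induction in the small-gradient phase, up to the $O(\epsilon_f)$ slack between $f$ and $\phi$ in Assumption~\ref{assum.func_bnd}). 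Everything else is the routine reciprocal-telescoping estimate for convex gradient descent.
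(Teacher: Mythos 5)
Your proof is correct and follows essentially the same route as the paper: the one-step decrease of Lemma~\ref{lem.1} combined with the convexity bound $\phi(x_k)-\phi^\star \le D\|\nabla\phi(x_k)\|$, a two-regime split at the threshold $2D\sqrt{\epsilon_f}/\sqrt{\gamma\eta}$, reciprocal telescoping in the large-gradient regime, and a bounded-increase argument once the gap reaches the noise floor. Your explicit induction in the small-gradient regime simply spells out what the paper compresses into its ``three facts,'' and the level-set caveat you flag (whether $\|x_k-x^\star\|\le D$ persists under the possible $O(\epsilon_f)$ increases) is likewise left implicit in the paper's own proof.
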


\begin{proof} By Assumption \ref{assum.conv}, we have
\begin{align}	\label{eq.convex_D}
	\phi(x_k) - \phi^\star \leq \nabla \phi(x_k)^T (x_k - x^\star) \leq \| \nabla \phi(x_k)\| \| x_k - x^\star \| \leq D \| \nabla \phi(x_k)\|.
\end{align}
Let $z_k = \phi(x_k) - \phi^\star$; by \eqref{eq.decrease_phi} and \eqref{eq.convex_D} we have,
\begin{align*}
	z_{k} - z_{k+1} \geq \frac{c_1 \tau \bar{\alpha} (1-\theta)^2 z_k^2}{D^2} - 4\epsilon_f.
\end{align*}
We thus have,
\begin{align}	\label{eq.convex1}
	\frac{1}{z_{k+1}} - 	\frac{1}{z_{k}} = \frac{z_k - z_{k+1}}{z_{k+1} z_k } \geq \frac{z_k - z_{k+1}}{ z_k^2 } \geq \frac{c_1 \tau \bar{\alpha} (1-\theta)^2 }{D^2} - \frac{4\epsilon_f}{z_k^2}.
\end{align}
We now consider two regimes: $(i)$ $z_i> \frac{2D \sqrt{\epsilon_f}}{\sqrt{\gamma c_1 \tau \bar{\alpha}}(1-\theta)}$, $\forall 0\leq i\leq k-1$ and $(ii)$ $z_i \leq \frac{2D \sqrt{\epsilon_f}}{\sqrt{\gamma c_1 \tau \bar{\alpha}}(1-\theta)}$, for some $0\leq i\leq k-1$,  where $\gamma \in (0,1)$. In the former case, the optimality gap is large compared to the noise, and thus by \eqref{eq.convex1} we have  $\forall 0\leq i\leq k$
\begin{align}	\label{eq.convex2}
	\frac{1}{z_{i+1}} - 	\frac{1}{z_{i}} \geq (1-\gamma) \frac{c_1 \tau \bar{\alpha} (1-\theta)^2 }{D^2}.
\end{align}
By aggregating this bounds for all  $\forall 0\leq i\leq k-1$ formula recursively, we obtain
\begin{align}	\label{eq.convex3}
	\frac{1}{z_{k}} \geq \frac{1}{z_{0}} + k(1-\gamma) \frac{c_1 \tau \bar{\alpha} (1-\theta)^2 }{D^2} \geq k(1-\gamma) \frac{c_1 \tau \bar{\alpha} (1-\theta)^2 }{D^2},
\end{align}
which yields the first part of the result in \eqref{eq.convex}. The second part of the result is obtained using three facts: the fact that there exists $i<k$ such 
that $z_i\leq \frac{2D \sqrt{\epsilon_f}}{\sqrt{\gamma c_1 \tau \bar{\alpha}}(1-\theta)}$, the fact that for all $i$ $z_{i+1}-z_i\leq 4 \epsilon_f$ and for all
$i$ such that $z_i> \frac{2D \sqrt{\epsilon_f}}{\sqrt{\gamma c_1 \tau \bar{\alpha}}(1-\theta)}$, $z_{i+1}-z_i\leq 0$. 
\end{proof}

 \begin{remark} The value $\phi^\star+\frac{2D \sqrt{\epsilon_f}}{\sqrt{\gamma \eta}}+4\epsilon_f$ can be interpreted as the lowest function value that is guaranteed to be achived in the presence of noise. 
\end{remark}

\subsection{Strongly Convex Functions}

In this section, we state and prove results for the case where the function $\phi$ is strongly convex.
\begin{assum} \label{assum.str_conv} \textbf{(Strong Convexity of $\pmb{\phi}$)}  There exist a constant $\mu>0$ such that, for all $x,y \in \mathbb{R}^n$,
\begin{align*}
	\phi(y) \geq \phi(x) + \nabla \phi(x)^T (y-x) + \frac{\mu}{2} \| x - y \|^2.
\end{align*}
Under Assumption \ref{assum.str_conv}, let $\phi^\star = \phi(x^\star)$, where $x^\star$ is the minimizer of $\phi$.
\end{assum}

\begin{theorem} \label{thm.str} Suppose that Assumptions \ref{assum.func_bnd}, \ref{assum.lip_grad} and \ref{assum.str_conv} hold. Let $\{ x_k \}$ be the iterates  generated by \eqref{eq.iteration}, where  $\alpha_k$ satisfies the relaxed Armijo condition \eqref{eq.rel_Arm}. Then, for $k=0,1,\dots$, 
\begin{align}	\label{eq.strong}
	\phi(x_{k}) - \left[\phi^\star + \frac{4\epsilon_f}{1-\rho}\right]\leq \rho^k \left(\phi(x_0) -\left[\phi^\star + \frac{4\epsilon_f}{1-\rho}\right]\right), 
\end{align}
where $\rho = 1 - 2 \mu c_1 \tau \bar{\alpha}(1-\theta)^2$, and $\bar{\alpha}$ is given in \eqref{eq.alpha_bar}.
\end{theorem}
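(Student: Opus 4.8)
The plan is to marry the single-step descent inequality \eqref{eq.decrease_phi} from Lemma \ref{lem.1} with the gradient-domination property that strong convexity implies, producing a contraction of the optimality gap up to an additive noise floor, and then to linearize the resulting affine recursion by recentering it at its fixed point. This mirrors the structure of the proof of Theorem \ref{thm.conv}, only now the per-step progress is proportional to $\phi(x_k)-\phi^\star$ itself rather than to its square.

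First I would record the gradient-domination (Polyak--{\L}ojasiewicz) bound $\|\nabla\phi(x)\|^2 \ge 2\mu\big(\phi(x)-\phi^\star\big)$ for all $x$, which is immediate from Assumption \ref{assum.str_conv}: minimizing the right-hand side $\phi(x)+\nabla\phi(x)^T(y-x)+\tfrac{\mu}{2}\|y-x\|^2$ over $y$, at $y=x-\tfrac1\mu\nabla\phi(x)$, gives $\phi^\star \le \phi(x)-\tfrac{1}{2\mu}\|\nabla\phi(x)\|^2$. Substituting this into \eqref{eq.decrease_phi}, and writing $\eta=c_1\tau\bar\alpha(1-\theta)^2$ and $\rho=1-2\mu\eta$ as in the theorem statement, the descent estimate becomes $\phi(x_{k+1})-\phi^\star \le \rho\,\big(\phi(x_k)-\phi^\star\big)+4\epsilon_f$.

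Next I would subtract the candidate fixed point $\tfrac{4\epsilon_f}{1-\rho}$ from both sides; since $4\epsilon_f-\tfrac{4\epsilon_f}{1-\rho}=-\tfrac{\rho\cdot 4\epsilon_f}{1-\rho}$, the affine recursion turns into the homogeneous one $\phi(x_{k+1})-\phi^\star-\tfrac{4\epsilon_f}{1-\rho} \le \rho\big(\phi(x_k)-\phi^\star-\tfrac{4\epsilon_f}{1-\rho}\big)$, and unrolling this from $k=0$ gives exactly \eqref{eq.strong}.

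The part requiring care --- and the only real obstacle --- is checking that $\rho\in(0,1)$, so that $\rho^k$ genuinely contracts and the recentering constant $\tfrac{4\epsilon_f}{1-\rho}$ is positive: $\rho<1$ holds because $\mu>0$ and $\bar\alpha>0$ (the latter needing $1-2\theta-c_1(1-\theta)>0$, which is implicit already in Lemma \ref{lem.1}), while $\rho>0$ requires $2\mu\eta<1$, which, using $\mu\le L$ together with the explicit value of $\bar\alpha$ from \eqref{eq.alpha_bar}, reduces to a mild constraint on $c_1,\tau,\theta$ that holds in the regime of interest. One should also note that the inductive unrolling uses $\rho\ge0$ to preserve the direction of the inequality. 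Everything beyond this bookkeeping is a routine iteration of the recursion.
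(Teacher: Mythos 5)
Your proposal follows essentially the same route as the paper's proof: apply the Polyak--{\L}ojasiewicz consequence of strong convexity, $\|\nabla\phi(x_k)\|^2\geq 2\mu(\phi(x_k)-\phi^\star)$, inside the descent inequality \eqref{eq.decrease_phi}, recenter the resulting affine recursion at its fixed point $\tfrac{4\epsilon_f}{1-\rho}$, and unroll. Your additional remark that the unrolling needs $\rho\geq 0$ (and implicitly $\rho<1$ for the fixed point to make sense) is a point of care the paper glosses over, but it is the same argument.
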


\begin{proof} Starting with \eqref{eq.decrease_phi}, by strong convexity we have $\| \nabla \phi (x_k)\|^2 \geq 2 \mu (\phi(x_k) - \phi^\star)$, thus
\begin{align*}
	\phi(x_{k+1}) - \phi^\star &\leq \phi(x_k) - \phi^\star - 2\mu c_1 \tau \bar{\alpha} (1-\theta)^2 (\phi(x_k) - \phi^\star) + 4 \epsilon_f\\
		& = \rho(\phi(x_k) - \phi^\star) + 4 \epsilon_f.
\end{align*}
Subtracting $\frac{4 \epsilon_f}{1 - \rho}$ from both sides, 
\begin{align*}
	\phi(x_{k+1}) - \phi^\star - \frac{4 \epsilon_f}{1 - \rho} &\leq  \rho(\phi(x_k) - \phi^\star) + 4 \epsilon_f - \frac{4 \epsilon_f}{1 - \rho}\\
			& = \rho(\phi(x_k) - \phi^\star) + \frac{(1- \rho)4 \epsilon_f - 4 \epsilon_f}{1 - \rho}\\
			& = \rho(\phi(x_k) - \phi^\star) - \frac{\rho4 \epsilon_f}{1 - \rho}\\
			& = \rho \left(\phi(x_k) - \phi^\star - \frac{4 \epsilon_f}{1 - \rho} \right).
\end{align*}
Recursive application of the above yields the desired result.
\end{proof}

\begin{remark} We interpret the term $\left[\phi^\star + \frac{4\epsilon_f}{1-\rho}\right]$ in \eqref{eq.strong} as the lowest value of the objective that is guaranteed to be achieved in the presence of noise. 
\end{remark}

\subsection{Nonconvex Functions}

In this section, we state and prove results for the case where the function $\phi$ is nonconvex.

\begin{assum} \label{assum.lower_bnd} \textbf{(Lower Bound on $\pmb{\phi}$)} The function $\phi$ is bounded below by a scalar $\hat{\phi}$.
\end{assum}

\begin{theorem} \label{thm.nonconvex} Suppose that Assumptions \ref{assum.func_bnd}, \ref{assum.lip_grad} and \ref{assum.lower_bnd} hold. Let $\{ x_k \}$ be the iterates  generated by \eqref{eq.iteration}, where $\alpha_k$ satisfies the relaxed Armijo condition \eqref{eq.rel_Arm}. Then, for any $T\geq 1$,
\begin{align*}
	\frac{1}{T} \sum_{k=0}^{T-1} \| \nabla \phi(x_k)\|^2 &\leq \frac{\phi(x_0) - \hat{\phi}}{\eta T} + \frac{4 \epsilon_f}{\eta} \xrightarrow[]{T\rightarrow \infty} \frac{4 \epsilon_f}{\eta}.
\end{align*}
where $\eta =  c_1 \tau \bar{\alpha}(1-\theta)^2$, and $\bar{\alpha}$ is given in \eqref{eq.alpha_bar}.
\end{theorem}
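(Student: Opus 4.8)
The plan is to obtain the bound directly from the per-iteration decrease already established in Lemma \ref{lem.1}. Recall that \eqref{eq.decrease_phi} asserts $\phi(x_{k+1}) \leq \phi(x_k) - \eta \|\nabla \phi(x_k)\|^2 + 4\epsilon_f$ with $\eta = c_1 \tau \bar\alpha (1-\theta)^2$, and that this holds for every $k$: the backtracking line search accepts a step $\alpha_k > \tau\bar\alpha$, while Lemma \ref{lem.1} guarantees that any $\alpha_k \leq \bar\alpha$ satisfies the relaxed Armijo condition \eqref{eq.rel_Arm}, so the accepted $\alpha_k$ indeed lies in the regime where \eqref{eq.decrease_phi} is valid. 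Consequently, all the analytic content — the bounded noise $\epsilon_f$ and the inexactness of $g(x_k)$ entering through the norm condition \eqref{eq.normcond} — has already been absorbed, and only an accumulation argument remains.

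First I would rearrange \eqref{eq.decrease_phi} as $\eta \|\nabla \phi(x_k)\|^2 \leq \phi(x_k) - \phi(x_{k+1}) + 4\epsilon_f$ and sum over $k = 0, 1, \dots, T-1$. The right-hand side telescopes to $\phi(x_0) - \phi(x_T) + 4 \epsilon_f T$, and by Assumption \ref{assum.lower_bnd} we have $\phi(x_T) \geq \hat\phi$, hence $\phi(x_0) - \phi(x_T) \leq \phi(x_0) - \hat\phi$. This gives $\eta \sum_{k=0}^{T-1} \|\nabla \phi(x_k)\|^2 \leq \phi(x_0) - \hat\phi + 4\epsilon_f T$.

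Dividing through by $\eta T$ then yields exactly the stated inequality $\frac{1}{T}\sum_{k=0}^{T-1}\|\nabla\phi(x_k)\|^2 \leq \frac{\phi(x_0)-\hat\phi}{\eta T} + \frac{4\epsilon_f}{\eta}$, and letting $T \to \infty$ eliminates the first term, leaving the asymptotic value $4\epsilon_f/\eta$.

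I do not anticipate a genuine obstacle: this is the classical telescoping estimate for (inexact) gradient descent on nonconvex functions, and the only point requiring any care is the one noted above, namely confirming that \eqref{eq.decrease_phi} is legitimately available at every iterate — which is precisely what was argued in the proof of Lemma \ref{lem.1} via the constant backtracking factor $\tau$.
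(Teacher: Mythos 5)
Your proposal matches the paper's proof: both rearrange the per-iteration decrease \eqref{eq.decrease_phi} into $\eta\|\nabla\phi(x_k)\|^2 \leq \phi(x_k)-\phi(x_{k+1})+4\epsilon_f$, sum over $k=0,\dots,T-1$, telescope, bound $\phi(x_T)\geq\hat\phi$ via Assumption \ref{assum.lower_bnd}, and divide by $\eta T$. Your remark on why \eqref{eq.decrease_phi} is available at every iterate (backtracking with factor $\tau$ plus Lemma \ref{lem.1}) is correct and consistent with the paper.
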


\begin{proof} By \eqref{eq.decrease_phi}, we have
\begin{align*}
	\phi(x_{k+1}) \leq \phi(x_k) - c_1 \tau \bar{\alpha} (1-\theta)^2 \| \phi(x_k)\|^2 + 4 \epsilon_f,
\end{align*}
and thus
\begin{align*}
	\| \phi(x_k)\|^2 \leq \frac{\phi(x_k) - \phi(x_{k+1})}{c_1 \tau \bar{\alpha}(1-\theta)^2} + \frac{4 \epsilon_f}{c_1 \tau \bar{\alpha}(1-\theta)^2}
\end{align*}
Summing over the first $T-1$ iterations, 
\begin{align*}
	\sum_{k=0}^{T-1} \| \phi(x_k)\|^2 &\leq \sum_{k=0}^{T-1} \frac{\phi(x_k) - \phi(x_{k+1})}{c_1 \tau \bar{\alpha}(1-\theta)^2} + \sum_{k=0}^{T-1}\frac{4 \epsilon_f}{c_1 \tau \bar{\alpha}(1-\theta)^2}\\
			& \leq \frac{\phi(x_0) - \hat{\phi}}{c_1 \tau \bar{\alpha}(1-\theta)^2} + \frac{4 \epsilon_fT}{c_1 \tau \bar{\alpha}(1-\theta)^2},
\end{align*}
where $\phi(x_0) - \hat{\phi} \geq \phi(x_0) - \phi(x_T)$. Averaging over the first $T-1$ iterations yields the desired result.
\end{proof}

 \begin{remark} 
 $\sqrt{\frac{4 \epsilon_f}{\eta}}$ can interpreted as the lowest value of the norm of the  gradient that can be achieved in the presence of noise. 
 \end{remark}

\subsection{General Remarks}

In summary, the results presented in this section for a modified line search algorithm with noise show that the standard convergence rates hold until certain accuracy related to $\epsilon_f$  has been reached.  These convergence results only require the norm condition \eqref{eq.normcond} on the gradient estimate, but in the next section, we establish the norm condition under specific relation between $\|\nabla \phi(x)\|$ and $\epsilon_f$.

\section{Gradient Approximations}
\label{sec:grad_approx}
In this section, we compare two approaches for computing gradient approximations $g$ using only samples of function values $f(x)$. The first method is based on linear interpolation of these function values and the second is based on aggregated estimates of directional derivatives along  Gaussian directions.

\subsection{Linear Interpolation Models}	\label{sec:lin_mod}
Interpolation models have a long history in the DFO setting; see e.g., \cite{ConnScheToin97,ConnScheVice08c,ConnScheToin98,Powe74,Powe06,wild2008orbit,maggiar2018derivative}. These methods construct surrogate models of the objective function using interpolation or regression. While these methods usually construct quadratic models around $x \in \mathbb{R}^n$ 
we focus on the simplest case of linear models, 
\begin{align}		\label{eq:lin_mod}
	m(y)=f(x)+g(x)^T (y-x),
\end{align}
since for our large-scale applications we assume that one cannot compute the  number of  function evaluations required to construct quadratic models. 



Let us consider the following sample set  $\mathcal{X}=\{x+\sigma u_1, x+\sigma u_2, \ldots, x+\sigma u_n\}$ for some $\sigma >0$. In other words, we have $n$ directions denoted by $u_i$ and sample $f$ along those directions, around $x$, using step size $\sigma$. We also assume we know $f(x)$.  Let $F_{\cal X} \in \mathbb{R}^n$ be a vector whose entries are
$f(x+\sigma u_i) - f(x)$ for $i=1\dots n$, and let $Q_{\cal X} \in \mathbb{R}^{n \times n}$ define a matrix whose rows are given by $u_i$ for $i=1\dots n$. 
Model $m(u)=f(x)+g(x)^T (u-x)$ is constructed to satisfy  the interpolation conditions $f(x+\sigma u_i)=m(x+\sigma u_i)$ for all $i=1, \ldots, n$
which  can be written as
\begin{align}\label{eq:main_system}
	\sigma Q_{\cal X} g(x)=F_{\cal X}. 
\end{align}

If the matrix $Q_{\cal X}$ is nonsingular, then  $g(x) = \frac{1}{\sigma}Q_{\cal X}^{-1}F_{\cal X}$. When $Q_{\cal X}$ is the identity matrix, then we recover standard forward finite difference gradient estimation. In the specific case when $Q_{\cal X}$ is orthonormal, then 
$Q_{\cal X}^{-1}=Q_{\cal X}^T$, thus $g(x)$ is written as 
\begin{align}\label{eq:orthogonal_sampling}
g(x)=\sum_{i=1}^n \frac{f(x+\sigma u_i)-f(x)}{\sigma}u_i, 
\end{align}
which is a scaled version of  \eqref{eq:GSG_intro} with orthonormal $u_i$'s.  The difference in scaling between \eqref{eq:orthogonal_sampling} and \eqref{eq:GSG_intro} 
comes from the fact the difference in expected length of $u_i$ when $u_i$'s are orthonormal and when they are Gaussian. 

We show a bound on $\|g-\nabla \phi(x)\|$, which is an extension of results in
 \cite{ConnScheVice08c,conn2008geometry} to include the error term. 

\begin{theorem} \label{thm:bnd_linmod} Let $\mathcal{X}=\{x,x+\sigma u_1,\dots,x+\sigma u_n \}$ be set of interpolation points such that $\max_i\|u_i\|\leq 1$  and that 
$Q_{\cal X}$ is nonsingular. Assume that $f(x)=\phi(x)+\epsilon(x)$, where $\phi(x)$ satisfies Assumption \ref{assum.lip_grad} and $\epsilon(x)\leq \epsilon_f$ for all $x$. Then, 
\begin{align}
	\|g({x}) - \nabla \phi(x)\| \leq \frac{\| Q_{\cal X}^{-1}\|_2\sqrt{n}\sigma L}{2} +\frac{2\| Q_{\cal X}^{-1}\|_2\sqrt{n}\epsilon_f}{\sigma}.
\end{align}
\end{theorem}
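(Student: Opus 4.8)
The plan is to work directly from the interpolation system \eqref{eq:main_system}. Since $Q_{\cal X}$ is nonsingular, $g(x) = \frac{1}{\sigma}Q_{\cal X}^{-1}F_{\cal X}$, and therefore
\begin{equation*}
\sigma\big(g(x) - \nabla\phi(x)\big) = Q_{\cal X}^{-1}\big(F_{\cal X} - \sigma Q_{\cal X}\nabla\phi(x)\big).
\end{equation*}
So it suffices to control the residual vector $r := F_{\cal X} - \sigma Q_{\cal X}\nabla\phi(x)$ entrywise and then pass to norms via $\|g(x)-\nabla\phi(x)\| \le \frac{1}{\sigma}\|Q_{\cal X}^{-1}\|_2\,\|r\|_2$.

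First I would compute the entries of $r$. The $i$-th row of $Q_{\cal X}$ is $u_i^T$ and the $i$-th entry of $F_{\cal X}$ is $f(x+\sigma u_i)-f(x)$, so $r_i = f(x+\sigma u_i) - f(x) - \sigma u_i^T\nabla\phi(x)$. Writing $f = \phi + \epsilon$ and grouping, $r_i = \big(\phi(x+\sigma u_i) - \phi(x) - \sigma u_i^T\nabla\phi(x)\big) + \big(\epsilon(x+\sigma u_i) - \epsilon(x)\big)$, which cleanly separates a truncation (model-error) term from a noise term.

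Next I would bound the two groups separately. For the smooth term, Assumption \ref{assum.lip_grad} together with the standard consequence of an $L$-Lipschitz gradient (Taylor's theorem in integral form) gives $|\phi(x+\sigma u_i) - \phi(x) - \sigma u_i^T\nabla\phi(x)| \le \frac{L}{2}\|\sigma u_i\|^2 \le \frac{L\sigma^2}{2}$, using $\|u_i\|\le 1$. For the noise term, Assumption \ref{assum.func_bnd} gives $|\epsilon(x+\sigma u_i)-\epsilon(x)| \le 2\epsilon_f$. Hence $|r_i| \le \frac{L\sigma^2}{2} + 2\epsilon_f$ for each $i$, so $\|r\|_2 \le \sqrt{n}\big(\frac{L\sigma^2}{2} + 2\epsilon_f\big)$.

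Combining, $\|g(x) - \nabla\phi(x)\| \le \frac{1}{\sigma}\|Q_{\cal X}^{-1}\|_2\sqrt{n}\big(\frac{L\sigma^2}{2} + 2\epsilon_f\big) = \frac{\|Q_{\cal X}^{-1}\|_2\sqrt{n}\,\sigma L}{2} + \frac{2\|Q_{\cal X}^{-1}\|_2\sqrt{n}\,\epsilon_f}{\sigma}$, which is the claim. There is no genuinely hard step here; the only care needed is to keep the residual representation clean so that the $O(\sigma)$ truncation error and the $O(\epsilon_f/\sigma)$ noise-amplification error appear as separate additive terms — this is also what makes the balancing choice $\sigma \asymp \sqrt{\epsilon_f/L}$ transparent and lets the single factor $\|Q_{\cal X}^{-1}\|_2$ carry all the dependence on the geometry of the chosen directions, which is exactly the quantity compared across sampling schemes in the rest of the paper.
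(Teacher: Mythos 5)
Your proposal is correct and follows essentially the same route as the paper: the residual entries $r_i = \sigma\,(g(x)-\nabla\phi(x))^T u_i$ are exactly the quantities the paper bounds by $\tfrac{L\sigma^2}{2}+2\epsilon_f$ via the Lipschitz-gradient Taylor bound and the noise bound, after which both arguments pass to the Euclidean norm with the $\sqrt{n}$ factor and apply $\|Q_{\cal X}^{-1}\|_2$. The only difference is presentational (you phrase it as a residual of the linear system rather than componentwise directional-derivative errors), so there is nothing to add.
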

\begin{proof}
From the interpolation conditions and the mean value theorem $\forall i=1, \ldots, n$ we have 
\begin{align}
\sigma g(x)^T u_i &=f(x+\sigma u_i)-f(x)=\phi(x+\sigma u_i)-\phi(x)+\epsilon(x+\sigma u_i)-\epsilon(x)\\
&=\int_{0}^t\sigma u_i^T\nabla \phi(x+t\sigma u_i)dt+\epsilon(x+\sigma u_i)-\epsilon(x).
\end{align}
From the $L$-smoothness of $\phi(\cdot)$ and the bound on $\epsilon(\cdot)$ we have 
\begin{align}
\sigma |(g(x)-\nabla \phi(x))^Tu_i| \leq  \frac {L\sigma^2 \|u_i\|^2}{2}+2\epsilon_f, \quad \forall i=1, \ldots, n
\end{align}
which in turn implies 
\begin{align}
 \| Q_{\cal X}(g(x)-\nabla \phi(x))\| \leq  \frac {L\sigma \sqrt{n} }{2}+\frac{2 \sqrt{n} \epsilon_f}{\sigma},
\end{align}
and the theorem statement follows. 
\end{proof}

It is clear that the best bound on $\|g({x}) - \nabla \phi({x})\| $ is obtained when $ Q_{\cal X}$ is orthonormal, and henceforth we assume that this is the case. 

We now establish conditions on $\sigma$ and  $\| \nabla \phi({x})\|$ for which $\|g({x}) - \nabla \phi({x})\| \leq \theta \|\nabla \phi({x})\|$, for some given 
$\theta\in [0,1)$ and given bounds on the noise $\epsilon_f$. From Theorem \ref{thm:bnd_linmod} we need
\begin{align}
\frac {\sqrt{n}\sigma L}{2}+\frac{2 \sqrt{n} \epsilon_f}{\sigma}\leq \theta \|\nabla \phi({x})\|, \ \text{ or }\ \frac {\sqrt{n} \sigma^2 L }{2}- \theta \|\nabla \phi({x})\|{\sigma} + 2 \sqrt{n} \epsilon_f\leq 0. 
\end{align}
This is achieved by any $\sigma$ in the range 
\begin{align}
\frac{\theta \|\nabla \phi(x)\| -\sqrt{\theta^2 \|\nabla \phi(x)\|^2- 4Ln\epsilon_f}}{\sqrt{n}L}\leq \sigma \leq 
\frac{\theta \|\nabla \phi(x)\| +\sqrt{\theta^2 \|\nabla \phi({x})\|^2- 4Ln\epsilon_f}}{\sqrt{n}L},
\end{align}
as long as 
\vspace{-2pt}
\begin{align}\label{eq:bnd_grad_epsf}
 \|\nabla \phi(x)\| \geq \frac{ 2\sqrt{Ln\epsilon_f}}{\theta}. 
\end{align}
In Section \ref{sec:conv_analysis} we showed that when $\|g({x}) - \nabla \phi({x})\| \leq \theta \|\nabla \phi({x})\|$ holds for $\theta<1/2$, then the related line search algorithm 
converges to a neighborhood of the solution with essentially the same rate as a gradient-based line search or descent method. The neighborhood is defined by $\epsilon_f$ and the smaller this error is, the closer to the solution the algorithm converges. Here we note that the smaller the $\epsilon_f$, the easier it is to satisfy $\|g({x}) - \nabla \phi({x})\| \leq \theta \|\nabla \phi({x})\|$ by choosing appropriate $\sigma$, when \eqref{eq:bnd_grad_epsf} holds. In particular for $\epsilon_f=0$ any $\sigma$ less than or equal to $\theta \|\nabla \phi(x)\| /(\sqrt{n}L)$ works.

\begin{remark} A bound similar to the one in Theorem  \ref{thm:bnd_linmod}  can be derived for the symmetric formula \eqref{eq:cGSG_intro} when $u_i$'s are orthonormal.
Under assumption that $\nabla^2 \phi(x)$ is Lipschitz continuous, the first term in the bound decays as $\sigma^2$, instead of $\sigma$, however, in the presence of noise, this produces limited benefit and puts tighter restrictions on $\sigma$, hence we do not consider this version in this paper. 
\end{remark}

\subsection{Estimates of the Gradients of  Gaussian Smoothing}

 Gaussian smoothing has recently become a popular tool for building gradient approximations using only function values. This approach has been exploited in several recent papers; see e.g., \cite{nesterov2017random, maggiar2018derivative,ES, NES2}. Gaussian smoothing of a given function $f$ is obtained as  follows:
\begin{align}	\label{eq: ES obj}
\phi(x) 
&= \mathbb{E}_{u \sim \mathcal{N}({0}, I)} [f(x+\sigma u)] = \int_{\mathbb{R}^n} f(x+\sigma u) \pi (u|0,I) du,  
\end{align}
where 
$\mathcal{N}({0}, I)$ denotes the standard multivariate normal distribution. The function $\pi(u|0,I)$ is the probability density function (pdf) of $\mathcal{N}(0,I)$ evaluated at $u$.  

The gradient of $\phi$ can be expressed as
\begin{align}		\label{eq: ES g}
    &\nabla  \phi(x) = \frac{1}{\sigma} {\mathbb{E}}_{u \sim \mathcal{N}({0}, I)} [f(x+\sigma u) u]. 
\end{align}
The approach used in \cite{ES} is to approximate $\nabla \phi(x)$ by sample average approximations applied to \eqref{eq: ES g}. Here we will show that this approach produces less accurate gradients than  the methods described in the previous section for the same number of samples. In \cite{nesterov2017random,maggiar2018derivative} Gaussian smoothing is applied to functions that are possibly nonsmooth but Lipschitz continuous. In this section we will thus impose the same assumption.
\begin{assum} \label{assum.lip_cont} \textbf{(Lipschitz Continuity of $\pmb{f}$)} The function $f$ is $L_f$-Lipschitz continuous  for all $x \in \mathbb{R}^n$.
\end{assum}
It 
has been shown in \cite{nesterov2017random} that under Assumption \ref{assum.lip_cont}  $|\phi(x)-f(x)|\leq \epsilon_f=\sigma\sqrt{n} L_f$ and
$\phi(x)$ is Lipschitz smooth with constant $L=\sqrt{n} L_f/\sigma$.  Note that it is possible to derive 
similar bound on functions $f$ that are discontinuous with particular assumptions on the discontinuity, however, as we will show, even under Assumption \ref{assum.lip_cont} computing gradient estimates by sample averaging is more costly than by the interpolation methods.

%

Applying sample average approximations  to \eqref{eq: ES g}, yields
\begin{align}		\label{eq:ncGSG}
	g(x)=\frac{1}{N\sigma} \sum_{i=1}^N f(x+\sigma u_i) u_i, 
\end{align}
where $u_i \sim \mathcal{N}({0}, I)$ for $i = 1,2,\dots, N$. It can be easily shown that $g(x)$ computed via \eqref{eq:ncGSG} 
has large variance (the variance explodes as $\sigma$ goes to $0$). The following simple modification, 
\begin{align}		\label{eq:GSG}
	g(x) = \frac{1}{N} \sum_{i=1}^N \frac{f(x+\sigma u_i) - f(x)}{\sigma} u_i, 
\end{align}
 eliminates this problem  and is indeed  used in practice instead of \eqref{eq:ncGSG}. Note that  the expectation of \eqref{eq:GSG} is also  $\nabla \phi(x)$, since ${\mathbb{E}}_{u \sim \mathcal{N}(\mathbf{0}, I)} f(x) u=0$. 
In what follows we will refer to $g(x)$ computed via \eqref{eq:GSG}  as the Gaussian smoothed gradient (GSG). As pointed out in \cite{nesterov2017random}, $\frac{f(x+\sigma u_i) - f(x)}{\sigma} u_i$ can be 
interpreted as a forward finite difference version of the directional derivative of $f$ at $x$ along $u_i$. One can also consider \eqref{eq:cGSG_intro}, which is  the central difference variant of \eqref{eq:GSG}.

The properties of \eqref{eq: ES obj} and \eqref{eq:GSG}, with $N=1$, were analyzed in \cite{nesterov2017random}. However, this analysis does not explore the effect of $N>1$ on the variance of  $g(x)$. On the other hand, in \cite{ES},  GSG estimates  are computed using both \eqref{eq:GSG} and \eqref{eq:cGSG_intro}  with large samples sizes $N$
in a fixed step size gradient descent algorithm, but without any analysis or discussion of the choices of $N$, $\sigma$ or $\alpha$. Thus, the purpose of this section is to derive bounds on the approximation error $\| g(x) - \nabla \phi(x) \|$ for GSG, and to derive conditions of $\sigma$ and $N$ under which condition \eqref{eq.normcond} holds, and thus so do the convergence results for the line search DFO algorithm based on these approximations. While \eqref{eq:cGSG_intro}  is  used in practice, it does not yield better gradient approximation. We omit the analysis of \eqref{eq:cGSG_intro} here for brevity, although it is similar to the analysis of \eqref{eq:GSG} which we provide.

The variance for \eqref{eq:GSG} can be expressed as 
\begin{align} \label{eq:var GSG}  
	\Var{g(x)} = \frac{1}{N} \mathbb{E}_{u \sim \mathcal{N}(0,I)} \left[\left(\frac{f(x+\sigma u) - f(x)}{\sigma} \right)^2 u u^T \right] - \frac{1}{N} \nabla \phi(x) \nabla \phi(x)^T . 
\end{align}

First we state some properties of normally distributed multivariate random variable $u \in \mathbb{R}^n$. 
\begin{align}\label{eq:mvn}
	&\mathbb{E}_{u \sim \mathcal{N}({0},I)} \left[ u u^T \right]= I \nonumber\\
		&\mathbb{E}_{u \sim \mathcal{N}({0},I)} \left[ (u^Tu) u u^T \right]= (n+2)I \nonumber\\
	&\mathbb{E}_{u \sim \mathcal{N}({0},I)} \left[(a^T u)^2 u u^T \right]= a^T a I + 2 a a^T \nonumber\\
	&\mathbb{E}_{u \sim \mathcal{N}({0},I)} \left[a^T u \cdot u^T u \cdot u u^T \right]= 0_{n\times n} \\
	&\mathbb{E}_{u \sim \mathcal{N}({0},I)} \left[(u^T u)^2 u u^T \right]= (n+2)(n+4) I \nonumber\\
         &\mathbb{E}_{u \sim \mathcal{N}({0},I)} \left[a^T u \|u\|^3 \right]= 0,\nonumber\\
	&\mathbb{E}_{u \sim \mathcal{N}({0},I)} \left[(u^T u)^3 u u^T \right]= (n+2)(n+4)(n+8) I,\nonumber
\end{align}
where $a$ is any vector in $\mathbb{R}^n$ independent from $u$. We now provide bounds for the variance of GSG.

\begin{lem} \label{lemma:bnd_var}
Under Assumption  \ref{assum.lip_cont}, if $g(x)$ is calculated by \eqref{eq:GSG}, then, for all $x \in \mathbb{R}^n$,
\begin{align*}	
	\Var{g(x)} \preceq \kappa(x) I, \ \ \text{where} \ \ \kappa(x) = \frac{8 \|\nabla \phi(x)\|^2 +  L_f^2 n (n+2)(n+4)+8n(n+2)L_f^2n+16nL_f^2}{4N}.
\end{align*} 

\end{lem}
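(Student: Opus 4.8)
The plan is to bound $\Var{g(x)}$ by working directly from the closed form \eqref{eq:var GSG}. Since the second term $-\frac{1}{N}\nabla\phi(x)\nabla\phi(x)^T$ is negative semidefinite, it suffices to bound the first term, i.e., to show
\begin{align*}
	\mathbb{E}_{u \sim \mathcal{N}(0,I)} \left[\left(\frac{f(x+\sigma u) - f(x)}{\sigma} \right)^2 u u^T \right] \preceq N\kappa(x) I.
\end{align*}
First I would use $L_f$-Lipschitz continuity of $f$ to write $|f(x+\sigma u)-f(x)| \le \sigma L_f \|u\|$, but this alone only gives the $L_f^2\|u\|^2$-type terms and would not produce the $\|\nabla\phi(x)\|^2$ term; instead I would split the finite difference around the linearization. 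Write $f(x+\sigma u)-f(x) = [\phi(x+\sigma u)-\phi(x)] + [\epsilon(x+\sigma u)-\epsilon(x)]$ and then $\phi(x+\sigma u)-\phi(x) = \sigma\nabla\phi(x)^T u + R(u)$, where the smoothness of $\phi$ (recall $L=\sqrt{n}L_f/\sigma$ from the earlier discussion, so $\frac{L\sigma^2\|u\|^2}{2} = \frac{\sqrt{n}L_f\sigma\|u\|^2}{2}$) controls $|R(u)|$. Actually it is cleaner to keep $|f(x+\sigma u)-f(x)| \le \sigma L_f\|u\|$ for the "noise/remainder" bookkeeping and separately extract the linear term; the key algebraic device is the elementary inequality $(a+b+c)^2 \le$ (a weighted sum of $a^2,b^2,c^2$) — here taking the split into the directional-derivative part $\sigma\nabla\phi(x)^T u$, the second-order part bounded by $\tfrac{L\sigma^2}{2}\|u\|^2 = \tfrac{\sqrt n L_f \sigma}{2}\|u\|^2$, and the noise part bounded by $2\epsilon_f = 2\sigma\sqrt n L_f$.

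The main steps are then: (1) apply the chosen decomposition and an inequality of the form $(a+b+c)^2\le 4a^2 + 4b^2 + 2\cdot\ldots$ chosen so the coefficients reproduce the $8$, $8n$, and $16n$ constants in $\kappa(x)$; (2) substitute into the matrix expectation, so that one is left with three moment integrals: $\mathbb{E}[(\nabla\phi(x)^Tu)^2 uu^T]$, $\mathbb{E}[\|u\|^4 uu^T]$, and $\mathbb{E}[\|u\|^2 uu^T]$, plus a cross term $\mathbb{E}[(\nabla\phi(x)^Tu)\|u\|^2 uu^T]$ which vanishes by the fourth identity in \eqref{eq:mvn}; (3) plug in the Gaussian moment identities from \eqref{eq:mvn}: $\mathbb{E}[(a^Tu)^2uu^T] = \|a\|^2 I + 2aa^T \preceq 3\|a\|^2 I$, $\mathbb{E}[\|u\|^4 uu^T] = (n+2)(n+4)I$, $\mathbb{E}[\|u\|^2 uu^T] = (n+2)I$; (4) collect all terms, divide by $\sigma^2$ (which cancels against the $\sigma^2$, $\sigma^4$, $\sigma^2$ factors appropriately) and by $N$, and verify the constants match $\kappa(x) = \frac{8\|\nabla\phi(x)\|^2 + L_f^2 n(n+2)(n+4) + 8n(n+2)L_f^2 n + 16nL_f^2}{4N}$. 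The appearance of $L_f^2 n$ (rather than $L_f^2$) in front of $(n+2)(n+4)$ confirms that the second-order remainder was bounded using $\tfrac{L\sigma}{2}\|u\|^2$ with $L=\sqrt n L_f/\sigma$, since squaring $\sqrt n$ yields the extra $n$; similarly $2\epsilon_f = 2\sigma\sqrt n L_f$ squared gives the $4n\sigma^2 L_f^2$ that becomes $16n L_f^2$ after the constant from the $(a+b+c)^2$ split.

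The main obstacle is bookkeeping discipline: getting the right weighted version of $(a+b+c)^2 \le \ldots$ so that all three constants ($8$, $8n\cdot$, $16n$) come out exactly as stated, and correctly tracking the powers of $\sigma$ through the three terms (the linear term scales as $\sigma^2$, the remainder as $\sigma^4$, the noise as $\sigma^2$ — wait, $\epsilon_f$ is itself proportional to $\sigma$, so $\epsilon_f^2 \sim \sigma^2$ — and each must cancel the $1/\sigma^2$ out front). There is no conceptual difficulty beyond this: the cross terms that could be problematic are exactly the ones killed by the odd-moment identity $\mathbb{E}[a^Tu\cdot u^Tu\cdot uu^T]=0$, and the positive-semidefinite ordering $\Var{g(x)} \preceq \kappa(x)I$ follows because every surviving matrix term is a positive multiple of $I$ plus possibly a positive-semidefinite rank-one piece $aa^T$ that we further bound by $\|a\|^2 I$. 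I would organize the proof to handle the three pieces in parallel columns of one display and then sum.
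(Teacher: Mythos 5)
Your high-level skeleton is the same as the paper's: start from \eqref{eq:var GSG}, split $f(x+\sigma u)-f(x)$ into the directional-derivative term $\sigma\nabla\phi(x)^Tu$, a second-order remainder controlled by $\tfrac{L\sigma^2}{2}\|u\|^2$, and a noise part controlled by $2\epsilon_f$, evaluate Gaussian moments via \eqref{eq:mvn}, and finish by substituting $L=\sqrt{n}L_f/\sigma$ and $\epsilon_f=\sqrt{n}L_f\sigma$ (that last bookkeeping of yours is right). However, two of your concrete choices make it impossible to arrive at the stated $\kappa(x)$. First, you discard the negative-semidefinite term $-\tfrac1N\nabla\phi(x)\nabla\phi(x)^T$ at the outset and then use $\mathbb{E}[(a^Tu)^2uu^T]=\|a\|^2I+2aa^T\preceq 3\|a\|^2I$; that route gives a coefficient $12\|\nabla\phi(x)\|^2$ in the numerator, not $8$. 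The paper keeps the subtracted rank-one term precisely so that $\|a\|^2I+2aa^T-aa^T\preceq 2\|a\|^2I$, which is exactly where the $8$ comes from; there is no slack to give away. Second, your "weighted $(a+b+c)^2$" device is internally inconsistent with your step (2): once you replace the square by a weighted sum of squares, there are no cross terms left for the odd-moment identities to annihilate. And it cannot be tuned to reproduce the stated constants: any valid inequality $(a+b+c)^2\le\lambda_a a^2+\lambda_b b^2+\lambda_c c^2$ requires $1/\lambda_a+1/\lambda_b+1/\lambda_c\le 1$, hence $\lambda_a>1$, so the gradient coefficient strictly exceeds $8$ even apart from the first issue; moreover the term $8n(n+2)L_f^2$ in $\kappa(x)$ is not a split artifact you can dial in, it is literally the retained cross product $2\cdot\tfrac{L\sigma^2}{2}u^Tu\cdot 2\epsilon_f$, whose expectation $2(n+2)\epsilon_f L\sigma^2 I$ survives because the paper expands the square \emph{exactly}, keeping the signed term $\sigma\nabla\phi(x)^Tu$ so that only the gradient--remainder and gradient--noise cross terms vanish by \eqref{eq:mvn}, while the remainder--noise cross term is kept.

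So, as designed, your argument would prove a variance bound of the same order in $n$, $N$, $L_f$ and $\|\nabla\phi(x)\|$, but with a strictly larger constant than the stated $\kappa(x)$; since the lemma pins $\kappa(x)$ down and Theorem \ref{lem:prob_bnd_smoothed} reuses these exact constants, that is a genuine gap rather than a cosmetic difference. The fix is to follow the paper's bookkeeping: keep \eqref{eq:var GSG} intact (do not drop the subtracted rank-one term), bound the finite difference inside the square by $\nabla\phi(x)^T\sigma u+\tfrac12 L\sigma^2u^Tu+2\epsilon_f$, expand the square term by term, use $\mathbb{E}[a^Tu\cdot u^Tu\cdot uu^T]=0$ and $\mathbb{E}[(a^Tu)\,uu^T]=0$ for the two cross terms involving the gradient, keep the $2\epsilon_f L\sigma^2(u^Tu)uu^T$ term, apply $\nabla\phi(x)\nabla\phi(x)^T\preceq\|\nabla\phi(x)\|^2I$ once, and only then substitute $L$ and $\epsilon_f$ in terms of $L_f$, $\sigma$ and $n$.
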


\begin{proof} 
By \eqref{eq:var GSG}, we have 
\begin{align*}
	\Var{g(x)} &= \frac{1}{N} \mathbb{E}_{u \sim \mathcal{N}({0},I)} \left[\left(\frac{f(x+\sigma u) - f(x)}{\sigma} \right)^2 u u^T \right ]- \frac{1}{N} \nabla \phi(x) \nabla \phi(x)^T \\
	&\preceq \frac{1}{N \sigma^2} \mathbb{E}_{u \sim \mathcal{N}({0},I)} \left[\left( \nabla \phi(x)^T \sigma u + \frac{1}{2}L \sigma^2 u^T u+2\epsilon_f\right)^2 u u^T   \right]- \frac{1}{N} \nabla \phi(x) \nabla \phi(x)^T\\
	&= \frac{1}{N \sigma^2} \mathbb{E}_{u \sim \mathcal{N}({0},I)} \Big[ \sigma^2 (\nabla \phi(x)^T u)^2 u u^T + L \sigma^3 (\nabla \phi(x)^T u) ( u^T u )u u^T  + \frac{1}{4} L^2 \sigma^4 (u^T u)^2 u u^T\\
	&\quad   +4 \epsilon_f\sigma (\nabla \phi(x)^T u)uu^T + 2\epsilon_f L \sigma^2 (u^T u)uu^T +4\epsilon^2_f uu^T \Big] - \frac{1}{N} \nabla \phi(x) \nabla \phi(x)^T \\
	&\stackrel{\mathclap{\mathrm{\eqref{eq:mvn}}}}{=} \frac{1}{N } \left( \nabla \phi(x)^T \nabla \phi(x) I + 2 \nabla \phi(x) \nabla \phi(x)^T\right) - \frac{1}{N} \nabla \phi(x) \nabla \phi(x)^T \\ &
	+ \quad \frac{1}{\sigma^2N }\left( L \sigma^3 \cdot 0 + \frac{1}{4} L^2 \sigma^4 (n+2)(n+4)I +4 \epsilon_f\sigma\cdot 0+2(n+2)\epsilon_fL\sigma^2I+4\epsilon^2_fI\right)  \\
	&\preceq \frac{ 8 \|\nabla \phi(x)\|^2 +   (n+2)(n+4)L^2\sigma^2 +8(n+2)\epsilon_fL}{4N}  I+\frac{4\epsilon^2_f}{\sigma^2N}I,
\end{align*}
where the first inequality comes from the Lipschitz continuity of the gradient of $\phi(x)$ and the bound  $|f(x)-\phi(x)|\leq \epsilon_f$
 and the last inequality is due to $\nabla \phi(x) \nabla \phi(x)^T \preceq \nabla \phi(x)^T \nabla \phi(x) I$. 

Finally, we recall that for our choice of $\phi$, $L=\sqrt{n}L_f/\sigma$ and  $\epsilon_f=\sqrt{n}L_f\sigma$. Substituting these expressions in the above yields the desired result. 
\end{proof}

Using the result of Lemma \ref{lemma:bnd_var}, we can now bound the quantity $\| g(x) - \nabla \phi(x) \|$, in probability, using Chebyshev's inequality. 

\begin{theorem} \label{lem:prob_bnd_smoothed} 
Let $\phi$ be a Gaussian smoothed approximation of $f$ \eqref{eq: ES obj}. Under Assumption  \ref{assum.lip_cont}, if $g(x)$ is  calculated via \eqref{eq:GSG} with sample size
\begin{align*}
	N \ge \frac{2n \|\nabla \phi(x)\|^2}{\delta r^2} +\frac{L_f^2 n (n+2)(n+4)+8n(n+2)L_f^2+16nL_f^2}{4\delta r^2},
\end{align*}
then, for all $x \in \mathbb{R}^n$, $\|g(x) - \nabla \phi(x)\| \le r$ holds with probability at least $1 - \delta$, for any $r>0$ and $0< \delta<1$. 
\end{theorem}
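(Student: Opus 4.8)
The plan is to obtain the high-probability bound directly from the variance estimate in Lemma~\ref{lemma:bnd_var} together with the unbiasedness of the estimator, via a multivariate Chebyshev inequality (equivalently, Markov's inequality applied to $\|g(x)-\nabla\phi(x)\|^2$). Since all the analytic work—the Gaussian moment identities in \eqref{eq:mvn} and the resulting Loewner bound on the covariance—is already packaged in Lemma~\ref{lemma:bnd_var}, what remains is essentially a one-line probabilistic argument plus bookkeeping.

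First I would recall from the discussion preceding \eqref{eq:GSG} that $\mathbb{E}_{u_i\sim\mathcal{N}(0,I)}[g(x)]=\nabla\phi(x)$, so $g(x)$ is an unbiased estimator of $\nabla\phi(x)$. Hence the mean-squared error equals the trace of the covariance matrix,
\begin{align*}
\mathbb{E}\big[\|g(x)-\nabla\phi(x)\|^2\big]=\mathrm{tr}\big(\Var{g(x)}\big).
\end{align*}
By Lemma~\ref{lemma:bnd_var} we have $\Var{g(x)}\preceq\kappa(x)I$ in the Loewner order, and since the trace is monotone with respect to this order, $\mathrm{tr}(\Var{g(x)})\le n\kappa(x)$.

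Next I would apply Markov's inequality to the nonnegative random variable $\|g(x)-\nabla\phi(x)\|^2$:
\begin{align*}
\mathbb{P}\big(\|g(x)-\nabla\phi(x)\|\ge r\big)=\mathbb{P}\big(\|g(x)-\nabla\phi(x)\|^2\ge r^2\big)\le\frac{n\kappa(x)}{r^2}.
\end{align*}
Substituting the expression for $\kappa(x)$ from Lemma~\ref{lemma:bnd_var} and requiring the right-hand side to be at most $\delta$ gives a lower bound on $N$ of the form $n\kappa(x)/r^2\le\delta$, i.e. $N\ge n(8\|\nabla\phi(x)\|^2+\text{(noise/Lipschitz terms)})/(4\delta r^2)$; separating off the leading term yields exactly the stated sample-size threshold. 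Note the bound is $\sigma$-free because the substitutions $L=\sqrt{n}L_f/\sigma$ and $\epsilon_f=\sqrt{n}L_f\sigma$ were already performed inside Lemma~\ref{lemma:bnd_var}.

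I do not anticipate a genuine obstacle: the delicate part—the sixth-moment computations for the Gaussian vector and the variance bound—lives in Lemma~\ref{lemma:bnd_var}, which I may assume. The only points needing mild care are (i) invoking unbiasedness so that the covariance trace really is the mean-squared error, and (ii) passing from the matrix inequality $\Var{g(x)}\preceq\kappa(x)I$ to the scalar inequality $\mathrm{tr}(\Var{g(x)})\le n\kappa(x)$; both are routine, and the final rearrangement to isolate $N$ is purely algebraic.
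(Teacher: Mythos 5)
Your proposal is correct and takes essentially the same route as the paper: a second-moment tail bound built on the variance estimate of Lemma~\ref{lemma:bnd_var}, yielding $\mathbb{P}\{\|g(x)-\nabla\phi(x)\|>r\}\le n\kappa(x)/r^2$ and hence the sample-size threshold. The only cosmetic difference is that you apply Markov's inequality to $\|g(x)-\nabla\phi(x)\|^2$ via unbiasedness and the trace bound $\mathrm{tr}(\Var{g(x)})\le n\kappa(x)$, whereas the paper invokes the Mahalanobis (multivariate Chebyshev) form and then inverts the Loewner bound $\Var{g(x)}\preceq\kappa(x) I$; both arguments are equivalent and give the identical final bound.
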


\begin{proof}
By Chebyshev's inequality, for any $r > 0$, we have
\begin{align*}
\mathbb{P}\left\{\sqrt{(g(x) - \nabla \phi(x))^T \Var{g(x)}^{-1} (g(x) - \nabla \phi(x))} > r \right\} 
&\le \frac{n}{r^2}. 
\end{align*}
Since $\Var{g(x)} \preceq \kappa I$, we have $\Var{g(x)}^{-1} \succeq \kappa^{-1} I$ and 
\[ \sqrt{(g(x) - \nabla \phi(x))^T \Var{g(x)}^{-1} (g(x) - \nabla \phi(x))} \ge \kappa^{-\frac{1}{2}} \|g(x) - \nabla \phi(x)\|.
\]
Therefore, we have, 
\begin{align*}
\mathbb{P}\left\{\kappa^{-\frac{1}{2}} \|g(x) - \nabla \phi(x)\| > r \right\} \le \frac{n}{r^2}  \ \ \Longrightarrow \ \
\mathbb{P}\left\{ \|g(x) - \nabla \phi(x)\| > r \right\} \le \frac{\kappa n}{r^2}. 
\end{align*}
By Lemma \ref{lemma:bnd_var}, for GSG we have
\begin{align*}
	\mathbb{P}\left\{ \|g(x) - \nabla \phi(x)\| > r \right\} &\le \frac{2n \|\nabla \phi(x)\|^2}{N r^2} +
	\frac{L_f^2 n (n+2)(n+4)+8n(n+2)L_f^2+16nL_f^2}{4N r^2}.
\end{align*} 
Thus when 
$$
N \ge \frac{2n \|\nabla \phi(x)\|^2}{\delta r^2} + \frac{L_f^2 n (n+2)(n+4)+8n(n+2)L_f^2+16nL_f^2}{4\delta r^2},
$$
 we have $\mathbb{P}\left\{ \|g(x) - \nabla \phi(x)\| > r  \right\} \le \delta$.

\end{proof}

We now derive the bound on $N$  under which the norm condition \eqref{eq.normcond} holds with some given probability $1-\delta$. Thus, we set $r=\theta\|\nabla \phi(x)\|$, and for proper comparison with the case of interpolation, we assume that $\theta\|\nabla \phi(x)\|\geq 2\sqrt{Ln\epsilon_f}$ which for $\phi(x)$ described by \eqref{eq: ES obj}  implies  $\theta\|\nabla \phi(x)\|\geq 2nL_f$. Plugging these relations into Lemma \ref{lem:prob_bnd_smoothed} gives
\begin{align*}
	N \ge \frac{2n}{\delta \theta^2} +\frac{ (n+2)(n+4)/4+2(n+2)+4}{4\delta n },
\end{align*}

When $n$ is large this bound shows that the number of samples needed to ensure  \eqref{eq.normcond}, with probability $1-\delta$, is  
\vspace{-2pt}
\begin{align*}
	N \geq \frac{2n }{\delta \theta^2}.
\end{align*}

Let us compare the sampling radius $\sigma$ used by the Gaussian smoothing method and that used by the interpolation methods.
For Gaussian smoothing we established that $\sigma=\epsilon_f/\sqrt{n} L_f=\epsilon_f/(\sigma L)$, which implies
$\sigma=\sqrt{\epsilon_f/L}$. For interpolation methods we have $\sigma\approx \theta  \|\nabla \phi(x)\|/ (\sqrt{n}L)$
with  $\theta \|\nabla \phi(x)\| \geq  2\sqrt{Ln\epsilon_f}$, which can give us the lower bound on $\sigma$ as approximately
 $2\sqrt{\epsilon_f/L}$.

\section{Numerical Experiments}
\label{sec:num_exp}
The goal of the numerical experiments presented in this section is two-fold: $(1)$ to illustrate empirically the accuracy of different gradient approximations, and $(2)$ to investigate the performance of different gradient approximations within a derivative-free method on reinforcement learning tasks. 

\subsection{Gradient Approximation Accuracy}
First, we compare the numerical accuracy of the gradient approximations obtained by the  following methods: $(1)$ linear interpolation with orthogonal directions (LIOD); $(2)$  linear interpolation with Gaussian directions (LIGD); $(3)$ Gaussian smoothing (GSG); and (4) centered Gaussian smoothing (cGSG) computed via \eqref{eq:cGSG_intro}. The LIOD  and LIDG methods differ in the way the directions $u_i$ are chosen, while both methods solve  \eqref{eq:main_system} to find $g(x)$. For  LIGD the $u_i$'s are chosen to be Gaussian directions and hence the matrix $Q_{\cal X}$ has to be inverted to find $g(x)$. This can be  computationally more costly than using orthonormal $u_i$,  but does not cause large variance in the gradient estimates, hence we include this method in the comparison to emphasize that it too can produce accurate gradient estimates. 
We include cGSG here to show that it does not provide an advantage. 

We measure the relative error $\theta = \tfrac{\| g(x) - \nabla \phi(x)\|}{\| \nabla \phi(x) \|}$
and report the average log of the relative error. Note, for these experiments, we assume that there is no noise, i.e., $\epsilon(x)=0$.

\vspace{-2pt}
\paragraph{Gradient Estimation -- Synthetic Function} We first conduct tests on a synthetic function
\begin{align}
	\phi(x) = \left( \sum_{i=1}^{n/2} M \sin (x_{2i-1}) + \cos(x_{2i})\right) + \frac{L-M}{2n}x^T \pmb{1}_{n \times n}x,
\end{align}
where $n$ is an even number denoting the input dimension, $\pmb{1}_{n \times n}$ denotes an $n$ by $n$ matrix of all ones, and
$L>M>0$. We approximate the gradient of $\phi$ at the origin, for which $\| \nabla \phi(0) \| = \sqrt{\frac{n}{2}}M$. We show results for different $N$ (number of samples) and $\sigma$ (sampling radius) in the boxplots of Figure \ref{fig:fig_grad_approx_sin}.

\begin{figure}[h!]
  \centering
  \includegraphics[trim=40 5 60 15,clip,width=0.49\linewidth]{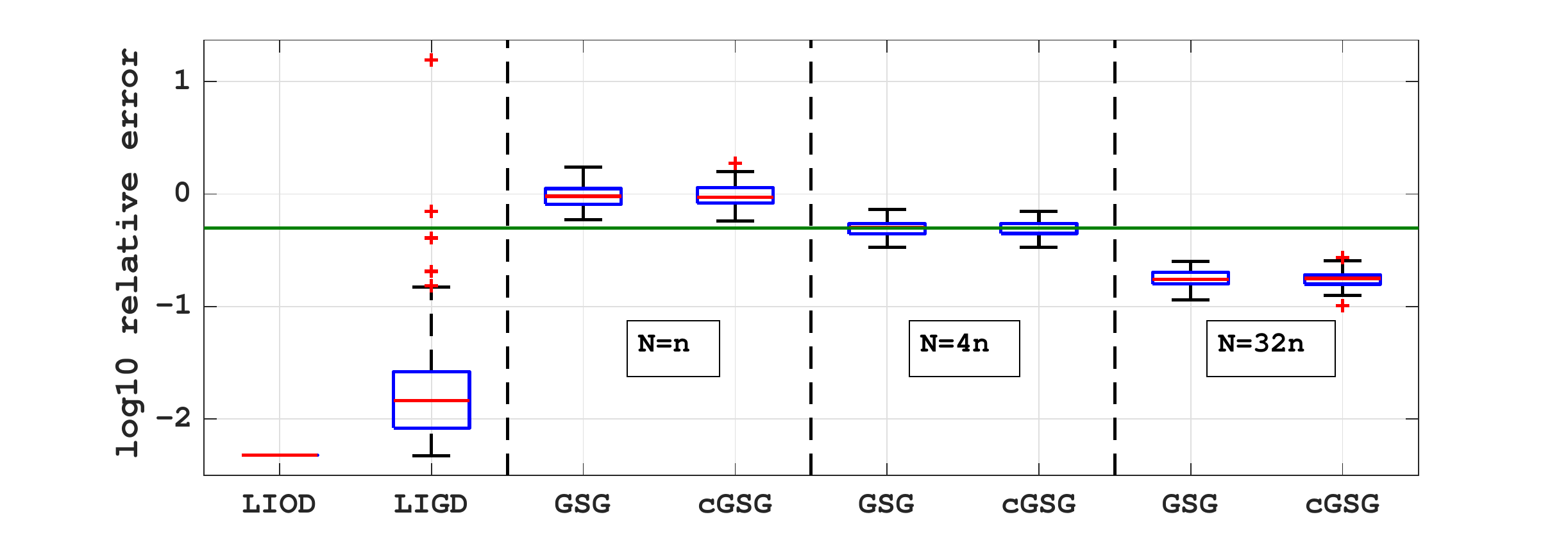}
  \includegraphics[trim=40 5 60 15,clip,width=0.49\linewidth]{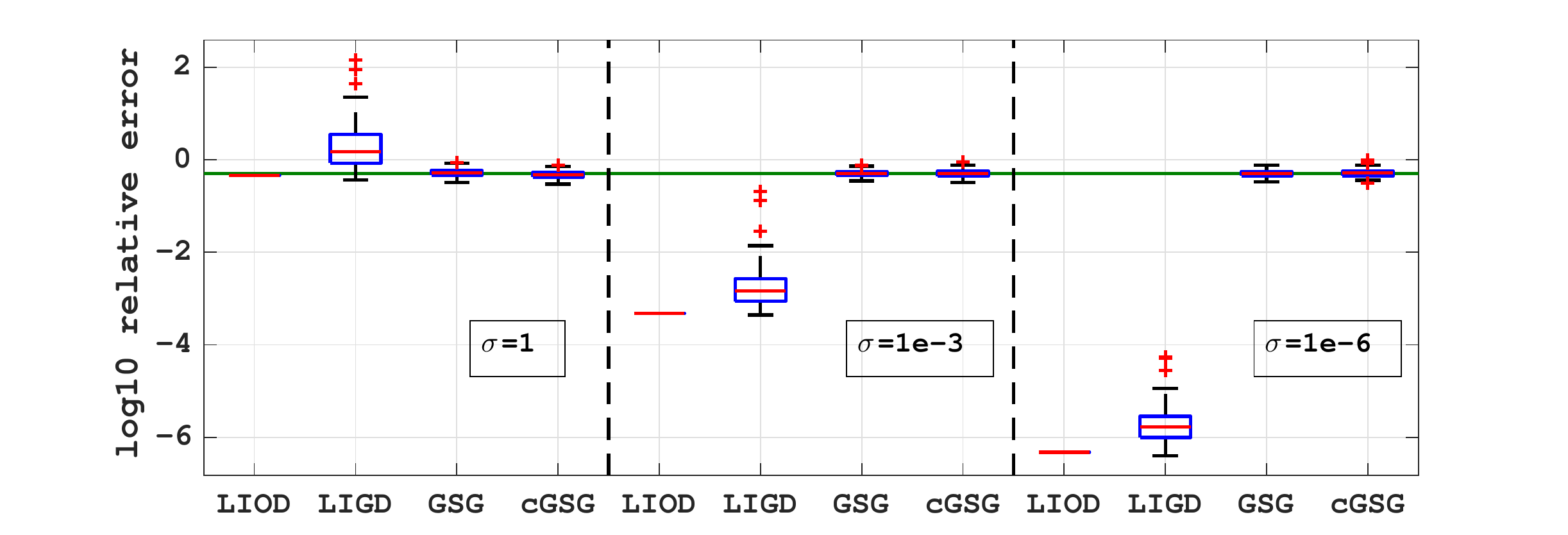}
  \caption{Log of $\theta$ of gradient approximations (LIOD, LIGD,  GSG, cGSG) with different $N$ (left), $\sigma$ (right).}
\label{fig:fig_grad_approx_sin}
\end{figure}

\paragraph{Gradient Estimation -- Schittkowski Functions} 

\begin{wraptable}{R}{0.5\textwidth}
    \begin{minipage}{0.5\textwidth}
    \vspace{-1.25cm}
    \begin{table}[H]
\caption{ Average log of $\theta$ of Gradient Approximations for Schittkowski Problems.}
\scriptsize
\label{tab:num_error}
\centering
\begin{tabular}{ccccc}
\toprule
\textbf{ Approx.} &
 \textbf{$\pmb{ {N}}$} &
 \textbf{$\pmb{ {\sigma}} = 10^{-2}$} &
 \textbf{$\pmb{ {\sigma}} = 10^{-5}$} &
 \textbf{$\pmb{ {\sigma}} = 10^{-8}$}  \\  \midrule
\textbf{LIOD} & $ {n}$ &  {0.2720} &\textbf{-2.6051} &\textbf{-5.3600} \\ \hline
\textbf{LIGD} & $ {n}$ &  {0.7757} &\textbf{-2.0992} &\textbf{-4.7714}  \\ \hline
\textbf{GSG} & $ {n}$ &  {0.7449} &  {0.0979} &{-0.0254} \\ \hdashline
		&$2 {n}$ &  {0.6527} &{-0.0220} &{-0.1529}  \\ \hdashline
	 	&$4 {n}$ &  {0.5309} &{-0.1580} &{-0.2988} \\ \hdashline
	 	&$8 {n}$ &  {0.4020} &\textbf{-0.3023} &\textbf{-0.4486}  \\ \hline
 \textbf{cGSG}  &$2 {n}$ &  {0.1219} &{-0.0453} &{-0.0403} \\ \hdashline
	 & $4 {n}$ &  {0.0022} &{-0.1761} &{-0.1796} \\ \hdashline
	 & $8 {n}$ &{-0.1159} &\textbf{-0.3209} &\textbf{-0.3136} \\ \hdashline
	 & $16 {n}$ &{-0.2387} &\textbf{-0.4649}  &\textbf{-0.4598} \\ \bottomrule
\end{tabular}
\end{table}
    \end{minipage}
  \end{wraptable}    
Next, we test the different gradient approximations on the 69 functions from the Schittkowski test set \cite{schittkowski2012more}.  For each problem we generated 100 points and computed the  gradient approximations. Table \ref{tab:num_error} summarizes the results of these experiments. We show the average log of the relative error for different choices of $\sigma$ ($\sigma \in \{ 10^{-2}, 10^{-5}, 10^{-8}\}$), and where appropriate different choices of $N$ (number of samples). Bold values indicate values of $\theta < \sfrac{1}{2}$.

\subsection{Reinforcement Learning}

In this section, we present numerical results for reinforcement learning (RL) tasks from the $\mathrm{OpenAI}$ $\mathrm{Gym}$ library \citep{brockman2016openai}. We compare LIOD with fixed $\alpha_k = \alpha$ and with $\alpha_k$ chosen via a line search, GSG with fixed $\alpha_k= \alpha$ and also the standard finite difference method (FD), which is a particular version of LIOD, with fixed $\alpha_k= \alpha$.

In all RL experiments the blackbox function $f$ takes as input the parameters of the policy $\pi_{x}:\mathcal{S} \rightarrow \mathcal{A}$ which maps states $\mathcal{S}$ to proposed actions $\mathcal{A}$. The output of $f$ is the total reward obtained by an agent applying that particular policy $\pi_{x}$ in the given environment. To encode policies $\pi_{x}$, we used fully-connected feedforward neural networks with two hidden layers, each with $h=41$ neurons and with $\mathrm{tanh}$ nonlinearities. The matrices of connections were encoded by low-displacement rank neural networks, as in several recent papers on applying orthogonal directions in gradient estimation for derivative free methods in reinforcement learning; see \cite{choro2}. We did not apply any additional techniques such as state/reward renormalization, ranking or filtering, in order to solely focus on the evaluation of the presented methods.

In order to construct orthogonal samples, at each iteration we conducted orthogonalization of random Gaussian matrices, with i.i.d entries sampled from $\mathcal{N}(0,1)$, via a Gram-Schmidt procedure; see \cite{choro2}. We should note that in the case of large $n$ instead of the orthogonalized Gaussian matrices, we can use constructions where orthogonality is embedded into the structure, such as random Hadamard matrices  \cite{choro2}, thus reducing the computational cost of generating random orthogonal matrices from  $\mathcal{O}(n^3)$ to $\mathcal{O}(n\log n)$. Note that the use of random Hadamard matrices introduces a small bias, however, they have been shown to work well in practice. 

All experiments were run with hyperparameter $\sigma=0.1$. Methods that did not apply a line search were run using the $\mathrm{Adam}$ optimizer \cite{kingma2014adam} with $\alpha = 0.01$. For the line search experiments, that adaptive $\alpha$ was chosen via the Armijo condition with Armijo parameter $c_{1}=0.2$ and backtracking factor $\tau=0.3$. For each environment and each method we ran $k=3$ experiments corresponding to different random seeds.

In Figure \ref{fig.RL}, we show the average (solid lines) and max/min (dashed lines) over a number of runs. While our theory is the same for FD and LIOD, our experiments show that for these tasks, choosing $u_i$ to be orthonormal but random helps the algorithm to avoid getting stuck in local maxima. We observe that the LIOD method is superior to the GSG and that line-search provides some improvements over $\mathrm{Adam}$.

\begin{figure}[h!]
  \centering
  \includegraphics[trim=25 175 60 190,clip,width=0.32\linewidth]{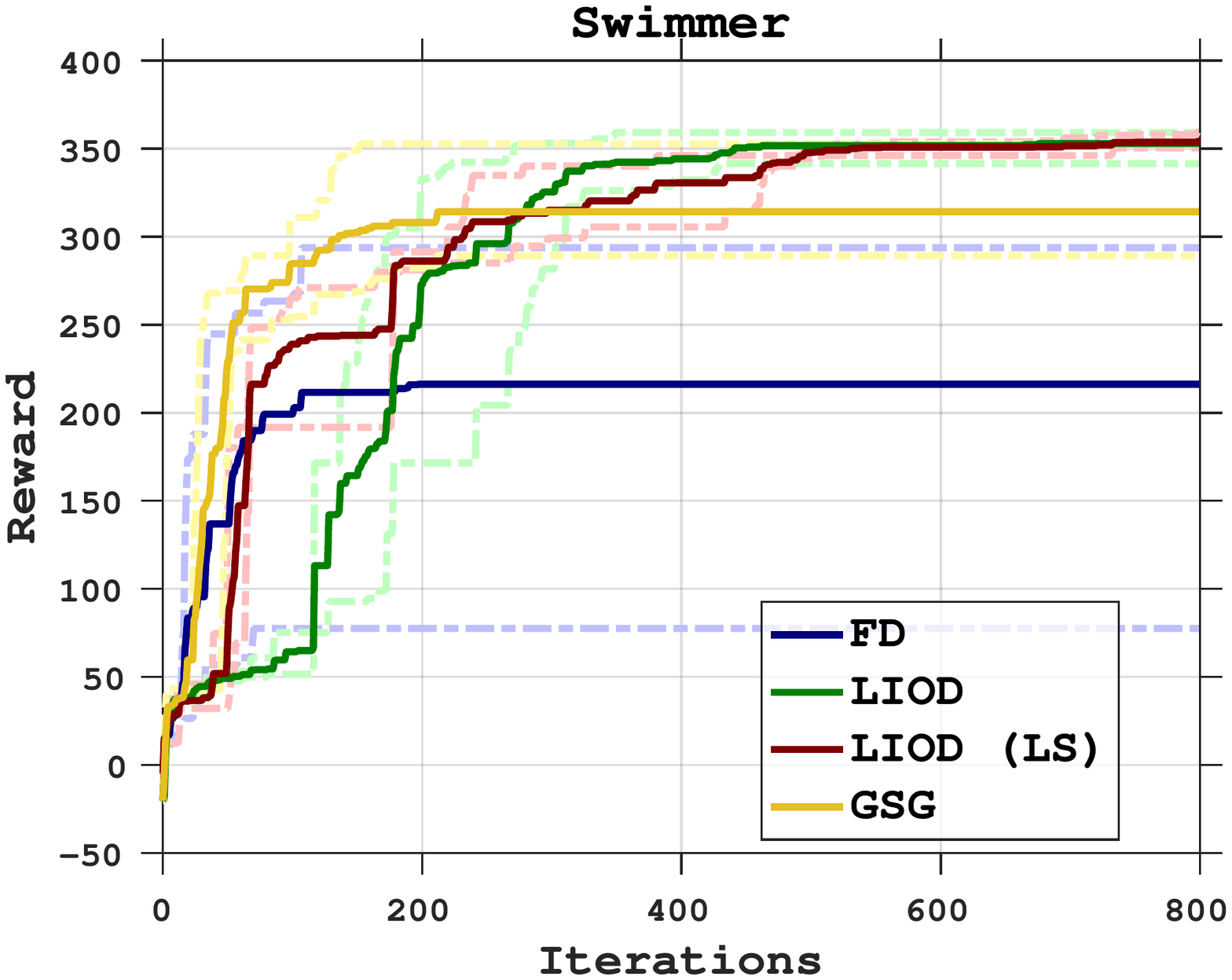}
  \includegraphics[trim=25 175 60 190,clip,width=0.32\linewidth]{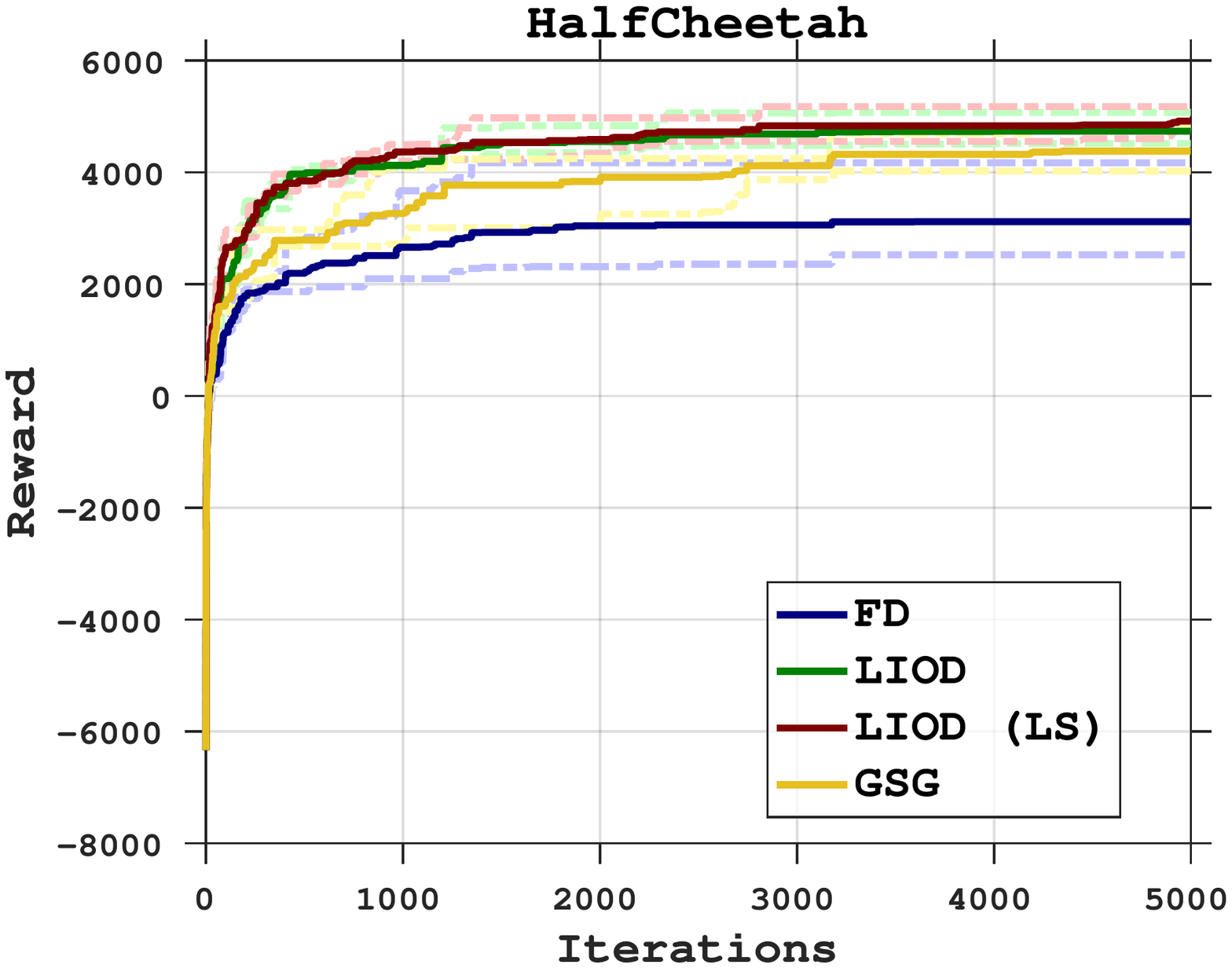}
\includegraphics[trim=25 175 60 190,clip,width=0.32\linewidth]{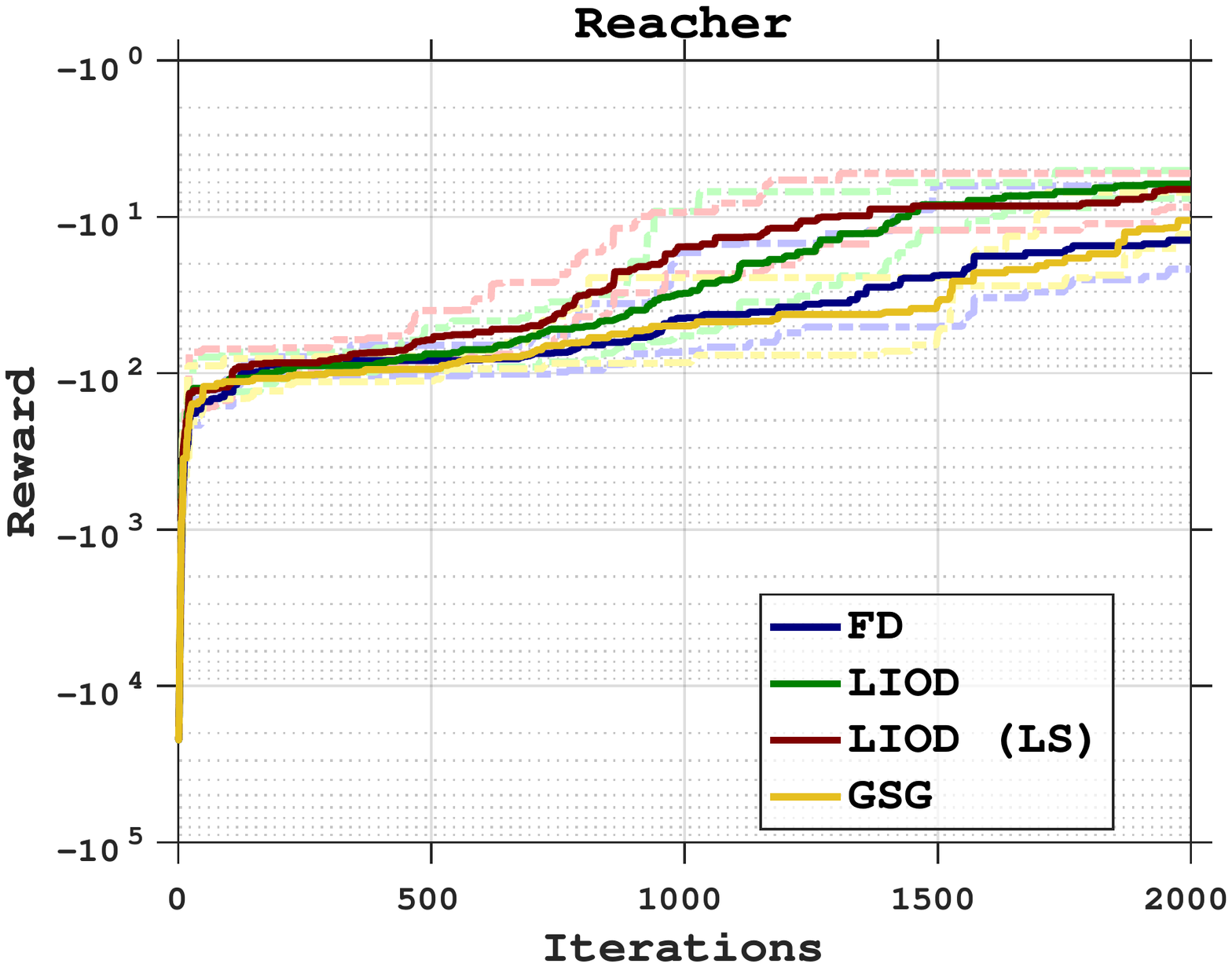}
      \caption{Reinforcement learning tasks: \texttt{Swimmer} (left), \texttt{HalfCheetah} (center), \texttt{Reacher} (right).}
\label{fig.RL}
\end{figure}

\section{Final Remarks}
\label{sec:fin_rem}
This paper describes and analyzes a  line-search derivative-free optimization algorithm adapted to the case of bounded noise. Complexity bounds are derived, in terms of convergence to a neighborhood of the optimal solution, under certain conditions on the gradient approximations. It is shown that these conditions can be satisfied by two popular methods for approximating gradients, with one method having a clear advantage over the other. Empirical tests on synthetic problems and on reinforcement learning tasks support the theoretical findings.

\newpage
\subsubsection*{Acknowledgements} 
This work was partially supported by the following grants: 2018 Google Faculty Research Award, DARPA Lagrange award HR-001117S0039, NSF TRIPODS 17-40796 grant and NSF CCF 16-18717 grant.


\bibliographystyle{plain}
\bibliography{Katya}

\begin{thebibliography}{10}

\bibitem{berahas2019derivative}
Albert~S Berahas, Richard~H Byrd, and Jorge Nocedal.
\newblock Derivative-free optimization of noisy functions via quasi-newton
  methods.
\newblock {\em SIAM Journal on Optimization}, 29(2):965--993, 2019.

\bibitem{berahas2019theoretical}
Albert~S Berahas, Liyuan Cao, Krzysztof Choromanski, and Katya Scheinberg.
\newblock A theoretical and empirical comparison of gradient approximations in
  derivative-free optimization.
\newblock {\em arXiv preprint arXiv:1905.01332}, 2019.

\bibitem{brockman2016openai}
Greg Brockman, Vicki Cheung, Ludwig Pettersson, Jonas Schneider, John Schulman,
  Jie Tang, and Wojciech Zaremba.
\newblock Openai gym.
\newblock {\em arXiv preprint arXiv:1606.01540}, 2016.

\bibitem{carter1991global}
Richard~G Carter.
\newblock On the global convergence of trust region algorithms using inexact
  gradient information.
\newblock {\em SIAM Journal on Numerical Analysis}, 28(1):251--265, 1991.

\bibitem{choro}
Krzysztof Choromanski, Atil Iscen, Vikas Sindhwani, Jie Tan, and Erwin Coumans.
\newblock Optimizing simulations with noise-tolerant structured exploration.
\newblock In {\em 2018 IEEE International Conference on Robotics and Automation
  (ICRA)}, pages 2970--2977. IEEE, 2018.

\bibitem{choro2}
Krzysztof Choromanski, Mark Rowland, Vikas Sindhwani, Richard~E Turner, and
  Adrian Weller.
\newblock Structured evolution with compact architectures for scalable policy
  optimization.
\newblock In {\em International Conference on Machine Learning}, pages
  969--977, 2018.

\bibitem{ConnScheToin97}
Andrew~R Conn, Katya Scheinberg, and Philippe~L Toint.
\newblock On the convergence of derivative-free methods for unconstrained
  optimization.
\newblock In A.~Iserles and M.~Buhmann, editors, {\em Approximation Theory and
  Optimization: Tributes to M. J. D. Powell}, pages 83--108, Cambridge,
  England, 1997. Cambridge University Press.

\bibitem{ConnScheToin98}
Andrew~R Conn, Katya Scheinberg, and Philippe~L Toint.
\newblock A derivative free optimization algorithm in practice.
\newblock Proceedings of the 7th AIAA/USAF/NASA/ISSMO Symposium on
  Multidisciplinary Analysis and Optimization, St. Louis, Missouri, September
  2-4, 1998.

\bibitem{conn2008geometry}
Andrew~R Conn, Katya Scheinberg, and Luis~N Vicente.
\newblock Geometry of interpolation sets in derivative free optimization.
\newblock {\em Mathematical programming}, 111(1-2):141--172, 2008.

\bibitem{ConnScheVice08c}
Andrew~R Conn, Katya Scheinberg, and Luis~N Vicente.
\newblock {\em Introduction to Derivative-free Optimization}.
\newblock MPS-SIAM Optimization series. SIAM, Philadelphia, USA, 2008.

\bibitem{fazel2018global}
Maryam Fazel, Rong Ge, Sham~M Kakade, and Mehran Mesbahi.
\newblock Global convergence of policy gradient methods for the linear
  quadratic regulator.
\newblock {\em arXiv preprint arXiv:1801.05039}, 2018.

\bibitem{flaxman2005online}
Abraham~D Flaxman, Adam~Tauman Kalai, and H~Brendan McMahan.
\newblock Online convex optimization in the bandit setting: gradient descent
  without a gradient.
\newblock In {\em Proceedings of the sixteenth annual ACM-SIAM symposium on
  Discrete algorithms}, pages 385--394. Society for Industrial and Applied
  Mathematics, 2005.

\bibitem{kingma2014adam}
Diederik~P Kingma and Jimmy Ba.
\newblock Adam: A method for stochastic optimization.
\newblock {\em arXiv preprint arXiv:1412.6980}, 2014.

\bibitem{LarsMeniWild2019}
Jeffrey Larson, Matt Menickelly, and Stefan~M Wild.
\newblock Derivative-free optimization methods.
\newblock {\em arXiv preprint arXiv:1904.11585}, 2019.

\bibitem{maggiar2018derivative}
Alvaro Maggiar, Andreas W\"{a}chter, Irina~S Dolinskaya, and Jeremy Staum.
\newblock A derivative-free trust-region algorithm for the optimization of
  functions smoothed via gaussian convolution using adaptive multiple
  importance sampling.
\newblock {\em SIAM Journal on Optimization}, 28(2):1478--1507, 2018.

\bibitem{nesterov2017random}
Yurii Nesterov and Vladimir Spokoiny.
\newblock Random gradient-free minimization of convex functions.
\newblock {\em Foundations of Computational Mathematics}, 17(2):527--566, 2017.

\bibitem{Powe74}
Michael J~D Powell.
\newblock Unconstrained minimization algorithms without computation of
  derivatives.
\newblock {\em Bollettino delle Unione Matematica Italiana}, 9:60--69, 1974.

\bibitem{Powe06}
Michael J~D Powell.
\newblock The {NEWUOA} software for unconstrained optimization without
  derivatives.
\newblock In {\em Large-Scale Nonlinear Optimization}, volume~83, pages
  255--297. Springer, US, 2006.

\bibitem{rowland}
Mark Rowland, Krzysztof Choromanski, Fran{\c{c}}ois Chalus, Aldo Pacchiano,
  Tam{\'{a}}s Sarl{\'{o}}s, Richard~E. Turner, and Adrian Weller.
\newblock Geometrically coupled monte carlo sampling.
\newblock In {\em Advances in Neural Information Processing Systems 31: Annual
  Conference on Neural Information Processing Systems 2018, NeurIPS 2018, 3-8
  December 2018, Montr{\'{e}}al, Canada.}, pages 195--205, 2018.

\bibitem{ES}
Tim Salimans, Jonathan Ho, Xi~Chen, Szymon Sidor, and Ilya Sutskever.
\newblock Evolution strategies as a scalable alternative to reinforcement
  learning.
\newblock Technical Report arXiv:1703.03864, 2016.

\bibitem{schittkowski2012more}
Klaus Schittkowski.
\newblock {\em More test examples for nonlinear programming codes}, volume 282.
\newblock Springer Science \& Business Media, 2012.

\bibitem{NES2}
Daan Wierstra, Tom Schaul, Tobias Glasmachers, Yi~Sun, Jan Peters, and
  J{\"u}rgen Schmidhuber.
\newblock Natural evolution strategies.
\newblock {\em The Journal of Machine Learning Research}, 15(1):949--980, 2014.

\bibitem{wild2008orbit}
Stefan~M Wild, Rommel~G Regis, and Christine~A Shoemaker.
\newblock {ORBIT:} optimization by radial basis function interpolation in
  trust-regions.
\newblock {\em SIAM Journal on Scientific Computing}, 30(6):3197--3219, 2008.

\end{thebibliography}



\end{document}